\topskip \setlength{\parindent}{0pt} \setlength{\parskip}{5pt plus
\numberwithin{equation}{section}
\newtheorem{theorem}{Theorem}[section]
\newtheorem{proposition}[theorem]{Proposition}
\newtheorem{corollary}[theorem]{Corollary}
\newtheorem{definition}[theorem]{Definition}
\newtheorem{remark}[theorem]{Remark}
\newtheorem{lemma}[theorem]{Lemma}
\newcommand*\circled[1]{\tikz[baseline=(char.base)]{
            \node[shape=circle,draw,inner sep=2pt] (char) {#1};}}
\begin{document}

\pagenumbering{arabic}
\pagestyle{headings}
\def\sof{\hfill\rule{2mm}{2mm}}
\def\llim{\lim_{n\rightarrow\infty}}

\title[Generalizations of Bell number formulas]{Generalizations of Bell number formulas of Spivey and Mez\H{o}}
\maketitle

\begin{center}Mark Shattuck\\
Department of Mathematics, University of Tennessee, Knoxville, TN 37919

{\tt shattuck@math.utk.edu}
\end{center}

%

\section*{Abstract}
We provide $q$-generalizations of Spivey's Bell number formula in various settings by considering statistics on different combinatorial structures.  This leads to new identities involving $q$-Stirling numbers of both kinds and $q$-Lah numbers.  As corollaries, we obtain identities for both binomial and $q$-binomial coefficients.  Our results at the same time also generalize recent $r$-Stirling number formulas of Mez\H{o}.  Finally, we provide a combinatorial proof and refinement of Xu's extension of Spivey's formula to the generalized Stirling numbers of Hsu and Shiue.  To do so, we develop a combinatorial interpretation for these numbers in terms of extended Lah distributions.

\noindent{Keywords}: Bell numbers, Stirling numbers, $q$-generalization, combinatorial proof

\noindent{2010 Mathematics Subject Classification}: 05A18, 05A19


\section{Introduction}

Let $S(n,k)$ denote the Stirling number of the second kind and $B(n)=\sum_{k=0}^n S(n,k)$, the $n$-th Bell number.  In 2008, Spivey \cite{Sp} proved the identity
\begin{equation}\label{Ieq1}
B(m+n)=\sum_{i=0}^n \sum_{j=0}^m j^{n-i}\binom{n}{i} S(m,j)B(i), \qquad m,n \geq 0,
\end{equation}
by an elegant combinatorial proof.  Since then several extensions have been given of \eqref{Ieq1} involving various generalizations of the Stirling and Bell numbers (see \cite{BM,GQ,Ka,Me,Xu}).

One such extension \cite{Me} involves the $r$-Stirling numbers \cite{Br} of both kinds and the $r$-Bell numbers \cite{Me0}, which count classes of permutations or partitions in which the elements $1,2,\ldots,r$ belong to distinct cycles or blocks.  Let $c^{(r)}(n,k)$ and $S^{(r)}(n,k)$ denote the $r$-Stirling numbers of the first and second kind, respectively, and $B^{(r)}(n)=\sum_{k=0}^nS^{(r)}(n,k)$ denote the $r$-Bell number.
Mez\H{o} \cite[Theorem 2]{Me} proved the following recursive formulas satisfied by these numbers.
\begin{theorem}[\textbf{Mez\H{o}}]\label{It1}
If $m,n,r\geq0$, then
\begin{equation}\label{It1e1}
B^{(r)}(m+n)=\sum_{i=0}^n\sum_{j=0}^m (j+r)^{n-i}\binom{n}{i}S^{(r)}(m,j)B(i)
\end{equation}
and
\begin{equation}\label{It1e2}
(r+1)^{\overline{m+n}}=\sum_{i=0}^n \sum_{j=0}^m m^{\overline{n-i}}\binom{n}{i}c^{(r)}(m,j)(r+1)^{\overline{i}}.
\end{equation}
\end{theorem}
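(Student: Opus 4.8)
The plan is to prove both identities by Spivey-type combinatorial arguments, using the standard models: $S^{(r)}(n,k)$ (resp.\ $c^{(r)}(n,k)$) counts the partitions (resp.\ permutations) of $\{1,\dots,n+r\}$ into $k+r$ blocks (resp.\ cycles) in which $1,\dots,r$ lie in distinct blocks (resp.\ cycles), so that $B^{(r)}(n)$ counts all such partitions and $(r+1)^{\overline{n}}=(r+1)(r+2)\cdots(r+n)$ counts all such permutations. In each case I think of $1,\dots,r$ as seeds of $r$ pre-existing blocks/cycles, and I split the remaining elements into a set $A$ with $|A|=m$ (the ``first $m$'') and a set $B$ with $|B|=n$ (the ``last $n$'').

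For \eqref{It1e1}: given an $r$-restricted partition $\pi$ of $\{1,\dots,r\}\cup A\cup B$, erase the elements of $B$. What remains is an $r$-restricted partition $\tau$ of $\{1,\dots,r\}\cup A$, say with $j$ non-special blocks, and there are $S^{(r)}(m,j)$ of these. Every block of $\pi$ meeting $\{1,\dots,r\}\cup A$ is one of the $r+j$ blocks of $\tau$ (possibly enlarged by elements of $B$), while the other blocks of $\pi$ lie entirely inside $B$. Hence $\pi$ is recovered from the data: a subset $T\subseteq B$ with $|T|=i$ (the elements whose block is contained in $B$), an arbitrary partition of $T$, and an assignment of each of the remaining $n-i$ elements of $B$ to one of the $r+j$ blocks of $\tau$. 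These contribute $\binom{n}{i}$, $B(i)$, and $(r+j)^{n-i}$ respectively; summing over $i$ and $j$ gives \eqref{It1e1}, and the correspondence is a bijection since $T$, $\tau$, the partition of $T$, and the assignment are all read off from $\pi$.

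For \eqref{It1e2}: I would use the ``insert and grow'' model for $r$-restricted permutations of $\{1,\dots,r+N\}$ — insert the non-special elements in increasing order, the $k$th one being placed either directly after one of the $r+k-1$ elements already present or as a new singleton cycle, for $r+k$ options and $(r+1)^{\overline{N}}$ permutations in all. The first $m$ insertions produce an $r$-restricted permutation $\tau$ of $\{1,\dots,r\}\cup A$, with $c^{(r)}(m,j)$ of them having $j$ non-special cycles. Insert the $n$ elements of $B$ next, and call such an element of \emph{type} $R$ if it starts a new cycle or is placed behind an element already of type $R$, and of type $M$ otherwise (equivalently, $b\in B$ is of type $M$ iff the least element of its cycle in the final permutation lies in $A$); this classification is a function of the permutation alone. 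If $T\subseteq B$ with $|T|=i$ is the set of type-$R$ elements, then the $t$th type-$R$ insertion has a new-cycle option together with one slot behind each element currently in an $R$-cycle, while the $t$th type-$M$ insertion has only one slot behind each element currently in an $M$-cycle. These counts depend on $\tau$ only through the number $a=a(\tau)$ of elements of $A$ that ended up in special cycles of $\tau$, giving $\binom{n}{i}(r+a+1)^{\overline{i}}(m-a)^{\overline{n-i}}$ for the insertion of $B$; then, for fixed $\tau$, the rising-factorial Vandermonde identity gives $\sum_i\binom{n}{i}(r+a+1)^{\overline{i}}(m-a)^{\overline{n-i}}=(r+m+1)^{\overline{n}}$, so the grand total is $(r+1)^{\overline{m}}(r+m+1)^{\overline{n}}=(r+1)^{\overline{m+n}}$; comparing with $\sum_j c^{(r)}(m,j)=(r+1)^{\overline{m}}$ and the same identity in the form $\sum_i\binom{n}{i}m^{\overline{n-i}}(r+1)^{\overline{i}}=(r+m+1)^{\overline{n}}$ recovers the precise right-hand side of \eqref{It1e2}.

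The main obstacle is exactly this last bookkeeping step for \eqref{It1e2}: the bijection most naturally proves an identity whose terms are grouped as $\binom{n}{i}(r+a+1)^{\overline{i}}(m-a)^{\overline{n-i}}$ rather than $\binom{n}{i}m^{\overline{n-i}}(r+1)^{\overline{i}}$, and the two match term-by-term only when $r=0$ (the classical factorial case), so one must either carry the invariant $a(\tau)$ carefully through the argument or, equivalently, derive \eqref{It1e2} from the factorization $(r+1)^{\overline{m+n}}=(r+1)^{\overline{m}}(r+m+1)^{\overline{n}}$ together with the evaluation of $\sum_j c^{(r)}(m,j)$ and the rising-factorial Vandermonde identity. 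No such subtlety arises for \eqref{It1e1}. I would expect these bijective proofs to be the ones that $q$-deform in the later sections.
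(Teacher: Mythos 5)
Your proposal is correct and matches the paper's route: the paper states this theorem as Mez\H{o}'s result and recovers \eqref{It1e1} as the $q=1$ case of Theorem \ref{P1} (whose proof is exactly your restrict-to-$[r]\cup A$, choose-$T$, assign-the-rest argument) and \eqref{It1e2} as the $q=1$ case of Corollary \ref{t4c1}, which---just as in your fallback---is obtained by multiplying the single-sum identity $(m+r+1)^{\overline{n}}=\sum_i\binom{n}{i}m^{\overline{n-i}}(r+1)^{\overline{i}}$ (proved there by classifying the last $n$ elements of a permutation in $\mathcal{G}_{n}^{(m+r)}$ according to whether their cycle meets $[m]$) by $\sum_j c^{(r)}(m,j)=(r+1)^{\overline{m}}$, using $(r+1)^{\overline{m}}(m+r+1)^{\overline{n}}=(r+1)^{\overline{m+n}}$. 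You correctly diagnose that the double sum in \eqref{It1e2} factors and that a single naive bijection yields a differently grouped identity; the only blemish is that your two definitions of type $R$/$M$ (via insertion history versus via the cycle minimum lying in $A$) disagree for elements placed behind members of $[r]$, but since the grouped count $\binom{n}{i}(r+a+1)^{\overline{i}}(m-a)^{\overline{n-i}}$ is only used as motivation and your actual proof goes through the factorization, this does not affect correctness.
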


Here, $a^{\overline{b}}=a(a+1)\cdots (a+b-1)$ if $b \geq 1$ denotes the \emph{rising factorial}, with $a^{\overline{0}}=1$.  Note that \eqref{It1e1} reduces to \eqref{Ieq1} when $r=0$.

In the next section, we provide $q$-generalizations of formulas \eqref{It1e1} and \eqref{It1e2} by considering statistics originally due to Carlitz \cite{Ca2} on set partitions and permutations and restricting those statistics to the substructures enumerated by the $r$-Stirling numbers of the first and second kinds (see \eqref{P1e2} and \eqref{t4c1e1} below).  Some special cases of these identities are noted.  We also consider an analogous $r$-Lah number (see \cite{NR}) that counts the class of Lah distributions (i.e., set partitions where the order matters within a block, for example, see \cite{LMS}) in which the elements $1,2,\ldots,r$ belong to distinct blocks.  By considering an earlier statistic of Garsia and Remmel \cite{GR}, we obtain $q$-extensions of an $r$-Lah number analogue of identities \eqref{It1e1} and \eqref{It1e2}.  This leads to an apparently new identity involving $q$-binomial coefficients.

Xu \cite{Xu} considered another type of extension of Spivey's result in terms of the generalized Stirling numbers of Hsu and Shiue \cite{HS}, which are denoted by $S(n,k;\alpha,\beta,r)$.  It was shown in \cite{Xu} by algebraic methods that
\begin{equation}\label{Ieq2}
B_{m+n;\alpha,\beta,r}(x)=\sum_{i=0}^n \sum_{j=0}^m \binom{n}{i}x^jS(m,j;\alpha,\beta,r)B_{i;\alpha,\beta,r}(x)\prod_{\ell=0}^{n-i-1}((m+\ell)\alpha+j\beta), \qquad m,n \geq 0,
\end{equation}
where $B_{n;\alpha,\beta,r}(x)=\sum_{k=0}^n S(n,k;\alpha,\beta,r)x^k$.  The numbers $S(n,k;\alpha,\beta,r)$ are seen to generalize a variety of combinatorial arrays, including Stirling numbers of both kinds, Lah numbers, and binomial coefficients.  For example, Spivey's formula \eqref{Ieq1} would correspond to the $x=\beta=1$ and $\alpha=r=0$ case of \eqref{Ieq2}.  Here, we develop a combinatorial interpretation of the numbers $S(n,k;\alpha,\beta,r)$, which have primarily been considered from an algebraic standpoint, as weighted sums over an extension of the set of Lah distributions.  Using this interpretation, we are then able to show that there is a one-to-one correspondence between structures enumerated by both sides of \eqref{Ieq2}.

We now recall some basic terminology and notation.  We will follow the convention that empty sums assume the value zero and empty products, the value one.  By a \emph{partition} of the set $[n]=\{1,2,\ldots,n\}$, we will mean a collection of non-empty, disjoint subsets, called \emph{blocks}, whose union is $[n]$ (see, e.g., \cite{Ma}).  The number of partitions of $[n]$ is given by $B(n)$, with $S(n,k)$ counting those having exactly $k$ blocks.  If $m$ and $n$ are positive integers, then let $[m,n]=\{m,m+1,\ldots,n\}$ if $m \leq n$, with $[m,n]=\varnothing$ if $m>n$.  If $q$ is an indeterminate, then $n_q=1+q+\cdots+q^{n-1}$ if $n \geq 1$, with $0_q=0$.  If $n \geq 1$, then the $q$-factorial is defined as $n_q!=\prod_{i=1}^n i_q$ if $n \geq 1$, with $0_q!=1$.  The $q$-binomial coefficient $\binom{n}{k}_q$ is given by $\frac{n_q!}{k_q!(n-k)_q!}$ if $0 \leq k \leq n$, with $\binom{n}{k}_q=1$ if $k=0$ for all integers $n$, and will be assumed to be zero otherwise.

\section{$q$-analogues of Spivey's formula}

In this section, we find $q$-analogues of identity \eqref{Ieq1} and the formulas in Theorem \ref{It1} which come about from considering certain statistics on set partitions, Lah distributions, and permutations.

\subsection{$q$-Stirling identities of the second kind}

We will need the following terminology and notation.  Let $S_q(n,k)$ (see, e.g., \cite{Wa}) be defined by the recurrence
$$S_q(n,k)=q^{k-1}S_q(n-1,k-1)+k_qS_q(n-1,k), \qquad n,k \geq 1,$$
with $S_q(n,k)=\delta_{n,k}$ if $n=0$ or $k=0$.  Note that $S_q(n,k)=q^{\binom{k}{2}}\widetilde{S}_q(n,k)$, where $\widetilde{S}_q(n,k)$ was considered by Carlitz \cite{Ca0,Ca1}.  Let $\mathcal{P}_{n,k}$ denote the set of partitions of $[n]$ into $k$ blocks and $\mathcal{P}_n=\cup_{k=0}^n\mathcal{P}_{n,k}$, the set of all partitions of $[n]$.  Given a partition $\pi=B_1/B_2/\cdots/B_k \in \mathcal{P}_{n,k}$ such that $\min B_1<\min B_2<\cdots <\min B_k$, let $w(\pi)=\sum_{i=1}^k (i-1)|B_i|$ (see, e.g., \cite{Ca2,Ma,Wa}).  It follows from the defining recurrence for $S_q(n,k)$, upon considering the position of the element $n$ within a member of $\mathcal{P}_{n,k}$, that
$$S_q(n,k)=\sum_{\pi \in \mathcal{P}_{n,k}}q^{w(\pi)}, \qquad n,k \geq 0.$$  If $n \geq 0$, then let $B_q(n)=\sum_{k=0}^n S_q(n,k)$ denote the corresponding $q$-Bell number \cite{Ma,Wa}.

Given $r \geq0$, let $\mathcal{P}^{(r)}_{n,k}$ denote the set of partitions of $[n+r]$ into $k+r$
blocks in which the elements $1,2,\ldots,r$ belong to distinct blocks and let $\mathcal{P}_n^{(r)}=\cup_{k=0}^n \mathcal{P}^{(r)}_{n,k}$.  The respective cardinalities of $\mathcal{P}^{(r)}_{n,k}$
and $\mathcal{P}^{(r)}_{n}$ are known as the $r$-Stirling and $r$-Bell numbers (see \cite{Br,Me0}) and are denoted here by $S^{(r)}(n,k)$ and $B^{(r)}(n)$, respectively.

We now $q$-generalize the numbers $S^{(r)}(n,k)$ and $B^{(r)}(n)$ by letting
$$S_q^{(r)}(n,k)=\sum_{\pi \in \mathcal{P}^{(r)}_{n,k}} q^{w(\pi)}, \qquad n,k \geq 0,$$
and $B_q^{(r)}(n)=\sum_{k=0}^n S_q^{(r)}(n,k)$.  Note that $S_q^{(r)}(n,k)$ and $B_q^{(r)}(n)$ reduce to $S_q(n,k)$ and $B_q(n)$ when $r=0$.

Considering the number, $i$, of elements of $[r+1,n+1]$ that do not lie in a block containing an element of $[r]$ within a member of $\mathcal{P}^{(r)}_{n,k}$ yields the relation
\begin{equation}\label{Pe1}
S_q^{(r)}(n,k)=\sum_{i=0}^n q^{ir}r_q^{n-i}\binom{n}{i} S_q(i,k), \qquad r \geq 0.
\end{equation}

The sequences $S_q^{(r)}(n,k)$ and $B_q^{(r)}(n)$ satisfy the following recurrences.  In this proof and those in later sections, we will denote the set $[m+r+1,m+n+r]$ by $I$.

\begin{theorem}\label{P1}
If $m,n,r \geq 0$, then
\begin{equation}\label{P1e1}
S_q^{(r)}(m+n,k)=\sum_{i=0}^n \sum_{j=0}^m q^{i(j+r)}[j+r]_q^{n-i}\binom{n}{i} S_q^{(r)}(m,j)S_q(i,k-j)
\end{equation}
and
\begin{equation}\label{P1e2}
B_q^{(r)}(m+n)=\sum_{i=0}^n \sum_{j=0}^mq^{i(j+r)}[j+r]_q^{n-i}\binom{n}{i} S_q^{(r)}(m,j) B_q(i).
\end{equation}
\end{theorem}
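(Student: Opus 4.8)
The plan is to prove \eqref{P1e1} by a weight-preserving decomposition that directly $q$-refines Spivey's original argument, and then deduce \eqref{P1e2} by summing on $k$. Throughout write $I=[m+r+1,m+n+r]$ as in the statement, and for a partition $\pi$ let $\mathrm{rk}_\pi(x)$ denote the rank of the block of $\pi$ containing $x$ when blocks are listed in increasing order of least element, so that $w(\pi)=\sum_x(\mathrm{rk}_\pi(x)-1)$. Given $\pi\in\mathcal P^{(r)}_{m+n,k}$ (a partition of $[m+n+r]$ into $k+r$ blocks with $1,\dots,r$ in distinct blocks), I would record the following data: (i) the restriction $\pi'$ of $\pi$ to $[m+r]$, which lies in $\mathcal P^{(r)}_{m,j}$ for a unique $j$ with $0\le j\le m$; (ii) the set $S\subseteq I$ of \emph{new} elements, i.e.\ those $x\in I$ whose block in $\pi$ is disjoint from $[m+r]$, with $i:=|S|$; (iii) the partition $\tau$ induced by $\pi$ on $S$, which has exactly $k-j$ blocks; and (iv) the map $f$ sending each $x\in I\setminus S$ to the block of $\pi'$ it joins in $\pi$. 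Conversely, an arbitrary quadruple — with $\pi'\in\mathcal P^{(r)}_{m,j}$, $S$ an $i$-subset of $I$, $\tau$ a partition of $S$ into $k-j$ blocks, and $f\colon I\setminus S\to\{\text{blocks of }\pi'\}$ — reassembles a unique member of $\mathcal P^{(r)}_{m+n,k}$, so $\pi\leftrightarrow(\pi',S,\tau,f)$ is a bijection.

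The crux is the weight bookkeeping. The key structural observation is that any block of $\pi$ meeting $[m+r]$ has its least element in $[m+r]$, hence precedes (in least-element order) every block of $\pi$ contained in $I$; therefore the $j+r$ blocks of $\pi$ coming from $\pi'$ occupy ranks $1,\dots,j+r$ in $\pi$, in the same relative order and with the same least elements as in $\pi'$, while the $k-j$ blocks coming from $\tau$ occupy ranks $j+r+1,\dots,k+r$ in their $\tau$-order. Summing $\mathrm{rk}_\pi(x)-1$ over the three groups $x\in[m+r]$, $x\in I\setminus S$, and $x\in S$ then gives
\[
w(\pi)=w(\pi')+\sum_{x\in I\setminus S}\bigl(\mathrm{rk}_{\pi'}(f(x))-1\bigr)+i(j+r)+w(\tau),
\]
the summand $i(j+r)$ arising because each of the $i$ new elements lies in a block ranked after all $j+r$ blocks of $\pi'$.

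Summing $q^{w(\pi)}$ over all $\pi$ by summing over the data $(\pi',S,\tau,f)$ in turn now factors the generating function. For fixed $i,j$ and fixed $S$, summing over $f\colon I\setminus S\to\{1,\dots,j+r\}$ yields $\prod_{x\in I\setminus S}\sum_{\ell=1}^{j+r}q^{\ell-1}=[j+r]_q^{\,n-i}$; summing over $\tau$ yields $S_q(i,k-j)$, since the order-isomorphism $S\to[i]$ preserves block sizes and the least-element order, hence $w(\tau)$; the choice of the $i$-subset $S$ contributes $\binom{n}{i}$; and summing over $\pi'$ yields $S_q^{(r)}(m,j)$. As $q^{i(j+r)}$ is constant over these choices, this produces exactly \eqref{P1e1}. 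For \eqref{P1e2}, sum \eqref{P1e1} over all $k\ge0$ and use $\sum_k S_q(i,k-j)=B_q(i)$ (equivalently, rerun the argument letting step (iii) be an arbitrary partition of $S$). As a sanity check, taking $m=0$ forces $j=0$ and recovers \eqref{Pe1}.

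I expect the only genuine obstacle to be making the displayed weight identity airtight — specifically, verifying that passing to the restriction $\pi'$ leaves the ranks of the old blocks unchanged and that every new block receives a strictly larger rank, since this is exactly what pins down the extra weight $j+r$ contributed by each new element. Once that structural fact is secured the sums separate mechanically; note that the $r$-restriction imposes nothing extra on the weight computation, the distinguished elements $1,\dots,r$ simply being forced to lie among the old blocks.
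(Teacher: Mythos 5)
Your proposal is correct and follows essentially the same decomposition as the paper: restrict $\pi$ to $[m+r]$, choose which elements of $I$ join old blocks (giving $\binom{n}{i}$ and $[j+r]_q^{n-i}$), and note that the $k-j$ new blocks are shifted right by $j+r$ positions (giving $q^{i(j+r)}S_q(i,k-j)$). The only difference is presentational: you make the bijection $\pi\leftrightarrow(\pi',S,\tau,f)$ and the additive splitting of $w(\pi)$ fully explicit, whereas the paper states the same weight bookkeeping more informally.
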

\begin{proof}
The second identity follows from the first by summing over $k$ or by allowing partitions to contain any number of blocks in what follows.  To show \eqref{P1e1}, we will argue that the $(i,j)$ term of the sum gives the weight (with respect to the $w$ statistic) of all members of $\mathcal{P}^{(r)}_{m+n,k}$ in which the elements of $[m+r]$ occupy exactly $j+r$ blocks, with $n-i$ elements of $I$ also belonging to those blocks.  Note that within such partitions, there are $S_q^{(r)}(m,j)$ possibilities (upon briefly regarding the indeterminate $q$ as a positive integer) for the partition comprising the elements of $[m+r]$ and $\binom{n}{n-i}$ choices for the elements of $I$ that are to go in this partition.  The factor $[j+r]_q^{n-i}$ then accounts for the placement of these
$n-i$ elements as there are $1+q+\cdots+q^{j+r-1}=[j+r]_q$ possibilities for each element (insertion into the $\ell$-th block from the left gives a factor of $q^{\ell-1}$ for each $1 \leq \ell \leq j+r$).  The remaining $i$ elements are then to be partitioned into $k-j$ additional blocks. Note that each of these blocks is translated to the right by $j+r$ positions, which implies that there are $q^{i(j+r)}S_q(i,k-j)$ possibilities for the remaining $i$ elements.
\end{proof}

\begin{remark}  Letting $r=0$ in \eqref{P1e2} recovers the identity of Katriel \cite{Ka}
(shown by operator methods) and thus we have obtained a combinatorial proof for it.  Letting $q=1$ in \eqref{P1e2} gives formula \eqref{It1e1} of Mez\H{o}.
\end{remark}

If $P$ is a statement, then let $\chi(P)=1$ or $0$ depending on whether $P$ is true or false. Given $n\geq1$, let $$\alpha_{i,j}=\chi(j \text{~is~odd})+\chi(j \text{~is~even~and~} i=n)$$ and $$\beta_{i,j}=\chi(j \text{~is~even})+\chi(j \text{~is~odd~and~} i=n)$$
for positive integers $i$ and $j$.  We now state some binomial coefficient identities which follow from the prior theorem.

\begin{proposition}\label{bin}
If $m,n,k \geq 1$, then
\begin{equation}\label{bine1}
\binom{m+n-k-1}{k-1}=\sum_{i=0}^n\sum_{j=0}^m \alpha_{i,j}(-1)^{(i+1)j}\binom{n}{i}\binom{m-\lfloor j/2\rfloor-1}{m-j}\binom{i-k+\lceil j/2\rceil-1}{i-2k+j},
\end{equation}
\begin{equation}\label{bine2}
\binom{m+n-k}{k-1}=\sum_{i=0}^n\sum_{j=0}^m \alpha_{i,j}(-1)^{ij}\binom{n}{i}\binom{m-\lfloor j/2\rfloor-1}{m-j}\binom{i-k+\lfloor j/2\rfloor}{i-2k+j+1},
\end{equation}
\begin{equation}\label{bine3}
\binom{m+n-k}{k}=\sum_{i=0}^n\sum_{j=0}^m \beta_{i,j}(-1)^{i(j+1)}\binom{n}{i}\binom{m-\lceil j/2\rceil}{m-j}\binom{i-k+\lceil j/2\rceil-1}{i-2k+j},
\end{equation}
and
\begin{equation}\label{bine4}
\binom{m+n-k}{k-1}=\sum_{i=0}^n\sum_{j=0}^m \beta_{i,j}(-1)^{(i+1)(j+1)}\binom{n}{i}\binom{m-\lceil j/2\rceil}{m-j}\binom{i-k+\lfloor j/2\rfloor}{i-2k+j+1}.
\end{equation}
\end{proposition}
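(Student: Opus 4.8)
The plan is to deduce all four identities from the single $q$-Stirling relation \eqref{P1e1} by specializing $q=-1$ (and taking $r\in\{0,1\}$), after first recording two closed forms. A routine induction on $n$, using the defining recurrence $S_q(n,k)=q^{k-1}S_q(n-1,k-1)+k_qS_q(n-1,k)$ together with $k_{-1}=\chi(k\text{ odd})$ and Pascal's rule, gives
$$S_{-1}(n,k)=(-1)^{\lfloor k/2\rfloor}\binom{n-\lfloor k/2\rfloor-1}{n-k},\qquad n,k\geq0.$$
Since the defining condition on $\mathcal P^{(1)}_{n,k}$ is vacuous and the statistic $w$ is unaffected, one has $S^{(1)}_q(n,k)=S_q(n+1,k+1)$, and hence
$$S^{(1)}_{-1}(n,k)=(-1)^{\lceil k/2\rceil}\binom{n-\lceil k/2\rceil}{n-k},\qquad n,k\geq0,$$
using $\lfloor(k+1)/2\rfloor=\lceil k/2\rceil$. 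The other thing to notice is that at $q=-1$ the powers of $q$-integers in \eqref{P1e1} collapse: since $[j]_{-1}=\chi(j\text{ odd})$, we get $[j]_{-1}^{\,n-i}=\alpha_{i,j}$ and $[j+1]_{-1}^{\,n-i}=\beta_{i,j}$ for $0\leq i\leq n$ (the case $j=0$ being covered by $0^0=1$).

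With these in hand, I would obtain \eqref{bine1} by setting $q=-1$, $r=0$, and replacing $k$ by $2k$ in \eqref{P1e1}. The left side becomes $S_{-1}(m+n,2k)=(-1)^k\binom{m+n-k-1}{k-1}$, and on the right I substitute $[j]_{-1}^{n-i}=\alpha_{i,j}$, $S_{-1}(m,j)=(-1)^{\lfloor j/2\rfloor}\binom{m-\lfloor j/2\rfloor-1}{m-j}$, and $S_{-1}(i,2k-j)=(-1)^{k-\lceil j/2\rceil}\binom{i-k+\lceil j/2\rceil-1}{i-2k+j}$ (the last using $\lfloor(2k-j)/2\rfloor=k-\lceil j/2\rceil$). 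The three binomials already match those in \eqref{bine1}; it remains to compare signs: the accumulated exponent of $-1$ on the right is $ij+\lfloor j/2\rfloor+k-\lceil j/2\rceil\equiv k+ij+\chi(j\text{ odd})\pmod 2$, so after cancelling the common factor $(-1)^k$ from both sides one is left with $(-1)^{ij+\chi(j\text{ odd})}$, and a short parity check (split into ``$j$ odd'' and ``$j$ even'') shows this equals $(-1)^{(i+1)j}$ on the support of $\alpha_{i,j}$. Identity \eqref{bine2} is the same computation with $k$ replaced by $2k-1$, now using $\lfloor(2k-1-j)/2\rfloor=k-1-\lfloor j/2\rfloor$; there the left side is $S_{-1}(m+n,2k-1)=(-1)^{k-1}\binom{m+n-k}{k-1}$, the inner factor is $(-1)^{k-1-\lfloor j/2\rfloor}\binom{i-k+\lfloor j/2\rfloor}{i-2k+j+1}$, and after cancelling $(-1)^{k-1}$ the sign is already $(-1)^{ij}$. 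For \eqref{bine3} and \eqref{bine4} I repeat this with $r=1$: now $[j+1]_{-1}^{n-i}=\beta_{i,j}$, the first binomial comes from $S^{(1)}_{-1}(m,j)=(-1)^{\lceil j/2\rceil}\binom{m-\lceil j/2\rceil}{m-j}$, the left side is $S^{(1)}_{-1}(m+n,2k)=(-1)^k\binom{m+n-k}{k}$ (resp. $S^{(1)}_{-1}(m+n,2k-1)=(-1)^k\binom{m+n-k}{k-1}$), the inner factor is $S_{-1}(i,2k-j)$ (resp. $S_{-1}(i,2k-1-j)$) as above, and the surviving sign after cancelling $(-1)^k$ turns out to be $(-1)^{i(j+1)}$ (resp. $(-1)^{(i+1)(j+1)}$) on the support of $\beta_{i,j}$.

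I expect the only real work to be this last bookkeeping: one must verify, in each of the four cases and in each of the parity subcases defining the support of $\alpha_{i,j}$ or $\beta_{i,j}$, that a sign of the shape $(-1)^{ij+\lfloor j/2\rfloor+\lfloor(2k-j)/2\rfloor}$ (and its three analogues) reduces modulo $2$ to the one printed in the statement. The identities $\lfloor j/2\rfloor+\lceil j/2\rceil=j$ and $\lceil j/2\rceil-\lfloor j/2\rfloor=\chi(j\text{ odd})$, together with the two displayed evaluations of $\lfloor(2k-j)/2\rfloor$ and $\lfloor(2k-1-j)/2\rfloor$, are exactly what is needed. A small amount of additional care is required at the boundary — the degenerate term $j=0$, and the ranges in which the various binomials vanish — but these are handled uniformly by the conventions adopted for $\binom{n}{k}$ and by the fact that $S_{-1}(n,k)$ and $S^{(1)}_{-1}(n,k)$ vanish outside $0\leq k\leq n$ in precisely the pattern matched by the two closed forms above.
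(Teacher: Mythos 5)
Your proposal is correct and follows the paper's proof in all essentials: specialize \eqref{P1e1} at $q=-1$ with $r=0$ (for \eqref{bine1}, \eqref{bine2}) and $r=1$ (for \eqref{bine3}, \eqref{bine4}), insert closed forms for $S_{-1}(n,k)$ and $S_{-1}^{(1)}(n,k)$, replace $k$ by $2k$ and $2k-1$, and track signs; your closed forms agree with the paper's \eqref{bine6} and \eqref{bine9} since $\binom{k}{2}\equiv\lfloor k/2\rfloor$ and $\binom{k+1}{2}\equiv\lceil k/2\rceil \pmod 2$, and your sign bookkeeping checks out in all four cases. The one genuine difference is how you obtain $S_{-1}^{(1)}(n,k)$: the paper starts from the convolution \eqref{Pe1} at $q=-1$ and evaluates the resulting alternating sum by a generating function computation, whereas you observe that the $r=1$ restriction on $\mathcal{P}^{(1)}_{n,k}$ is vacuous and the statistic $w$ is unchanged, so $S_q^{(1)}(n,k)=S_q(n+1,k+1)$ identically, from which the evaluation at $q=-1$ is an immediate index shift of the $r=0$ formula. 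Your route to that sub-step is shorter and arguably more transparent; the paper's generating-function derivation has the side benefit of producing the alternating-sum identity \eqref{bine8} along the way, but it is not needed for the proposition itself.
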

\begin{proof}
To show \eqref{bine1} and \eqref{bine2}, we take $q=-1$ and $r=0$ in \eqref{P1e1}. (Indeed, one gets the same result if one takes any even $r$ as the evaluations at $q=-1$ are all the same, by \eqref{Pe1}.) Since $m_q\mid_{q=-1}=\chi(m \text{~is~odd})$ and since the factor $[j+r]_q^{n-i}$  is $1$ for all $j$ when $i=n$, we have
\begin{equation}\label{bine5}
S_{-1}(m+n,k)=\sum_{i=0}^n\sum_{j=0}^m \alpha_{i,j}(-1)^{ij}\binom{n}{i}S_{-1}(m,j)S_{-1}(i,k-j),
\end{equation}
where $S_{-1}(n,k)=S_q(n,k)\mid_{q=-1}$. From \cite[Theorem 4.1]{Wa}, it follows that
\begin{equation}\label{bine6}
S_{-1}(n,k)=(-1)^{\binom{k}{2}}\binom{n-\lfloor k/2\rfloor -1}{n-k}, \qquad 0 \leq k \leq n.
\end{equation}
(The reader is referred to \cite{Sh} for a combinatorial proof.)  Identities \eqref{bine1} and \eqref{bine2} now follow from \eqref{bine5} and \eqref{bine6} by replacing $k$ with $2k$ and $2k-1$, respectively, and noting the fact $\binom{\ell}{2}=\binom{j}{2}+\binom{\ell-j}{2}+j(\ell-j)$ for $0 \leq j \leq \ell$.

To show \eqref{bine3} and \eqref{bine4}, we first take $q=-1$ and $r=1$ in \eqref{P1e1} (in fact, one could take any odd $r$).  This gives
\begin{equation}\label{bine7}
S_{-1}^{(1)}(m+n,k)=\sum_{i=0}^n \sum_{j=0}^m \beta_{i,j}(-1)^{i(j+1)}S_{-1}^{(1)}(m,j)S_{-1}(i,k-j).
\end{equation}
We seek an expression for $S_{-1}^{(1)}(n,k)$.  By \eqref{Pe1} at $q=-1$ and \eqref{bine6}, we have
\begin{equation}\label{bine8}
S_{-1}^{(1)}(n,k)=\sum_{i=k}^n (-1)^{i+\binom{k}{2}}\binom{n}{i}\binom{i-\lfloor k/2\rfloor-1}{i-k}.
\end{equation}

To find $S_{-1}^{(1)}(n,k)$, we make use of the generating function method as described in Wilf \cite[Section 4.3]{Wi}.  Let us fix $k$ and calculate the generating function $f_k(x):=\sum_{n\geq k}a_{n,k}x^n$, where $a_{n,k}=(-1)^{\binom{k}{2}}S_{-1}^{(1)}(n,k)$.  A computation gives
$$f_k(x)=\frac{(-x)^k}{(1-x)^{\lfloor k/2\rfloor+1}}.$$
Thus, we have
$$\sum_{n\geq k}a_{n,k}x^n=(-1)^k x^{\lceil k/2 \rceil}\cdot \frac{x^{\lfloor k/2 \rfloor}}{(1-x)^{\lfloor k/2\rfloor+1}}=(-1)^k\sum_{n\geq k}\binom{n-\lceil k/2\rceil}{\lfloor k/2 \rfloor}x^n,$$
which implies
\begin{equation}\label{bine9}
S_{-1}^{(1)}(n,k)=(-1)^{\binom{k}{2}}a_{n,k}=(-1)^{\binom{k+1}{2}}\binom{n-\lceil k/2\rceil}{\lfloor k/2 \rfloor}.
\end{equation}
Identities \eqref{bine3} and \eqref{bine4} now follow from \eqref{bine6}, \eqref{bine7} and \eqref{bine9}, upon replacing $k$ by $2k$ and $2k-1$, respectively.
\end{proof}

\subsection{$q$-Lah identities}

A partition of $[n]$ in which the order of the elements within a block matters is called a \emph{Lah distribution}.  Let $\mathcal{L}_{n,k}$ denote the set of all Lah distributions having $k$ blocks.  The cardinality of $\mathcal{L}_{n,k}$ is called the Lah number $L(n,k)$ (see \cite{La}).

We define a statistic on $\mathcal{L}_{n,k}$ as follows.  Given $\delta \in \mathcal{L}_{n,k}$, represent the contents of each ordered block by a word and then arrange these words in a sequence $W_1,W_2,\ldots,W_k$ by decreasing order of their least elements.  Then replace the commas in this sequence by zeros and count inversions in the resulting single word to obtain the value $\text{inv}_\rho(\delta)$, i.e.,
$$\text{inv}_\rho(\delta)=\text{inv}(W_10W_20\cdots W_{k-1}0W_k).$$
For example, if $\delta=\{3,2,5\},\{7,6,8\},\{1,4\} \in \mathcal{L}_{n,k}$, then we have $\text{inv}_\rho(\delta)=30$, the number of inversions in the word $7680325014$.

The statistic $\text{inv}_\rho$ is due to Garsia and Remmel \cite{GR}, who showed
\begin{equation}\label{lahe1}
L_q(n,k):=\sum_{\delta \in \mathcal{L}_{n,k}}q^{\text{inv}_\rho(\delta)}=q^{k(k-1)}\frac{n_q!}{k_q!}\binom{n-1}{k-1}_q, \qquad 1 \leq k \leq n,
\end{equation}
which generalizes the well known formula for $L(n,k)$.  For other examples of statistics on $\mathcal{L}_{n,k}$, see, e.g., \cite{LMS}.  Considering the position of the element $n$ within a member of $\mathcal{L}_{n,k}$, one obtains the recurrence
$$L_q(n,k)=q^{n+k-2}L_q(n-1,k-1)+[n+k-1]_qL_q(n-1,k), \qquad n,k \geq 1,$$
with $L_q(n,k)=\delta_{n,k}$ if $n=0$ or $k=0$.

We now recall a Lah number analogue of the $r$-Stirling numbers (see \cite{NR}), for which we will provide a $q$-generalization.  Let $\mathcal{L}^{(r)}_{n,k}$ denote the set of Lah distributions of $[n+r]$ into $k+r$ blocks in which the elements $1,2,\ldots,r$ belong to distinct blocks and let $L^{(r)}(n,k)=|\mathcal{L}^{(r)}_{n,k}|$.  Considering the number, $i$, of elements in $[r+1,r+n]$ that occupy a block containing a member of $[r]$ yields the relation
$$L^{(r)}(n,k)=\sum_{i=0}^n \prod_{\ell=0}^{i-1}(\ell+2r)\binom{n}{i}L(n-i,k), \qquad r \geq 0.$$
Let $L^{(r)}(n)=\sum_{k=0}^rL^{(r)}(n,k)$.

We $q$-generalize the numbers $L^{(r)}(n,k)$ and $L^{(r)}(n)$ by letting
$$L^{(r)}_q(n,k)=\sum_{\delta\in\mathcal{L}^{(r)}_{n,k}}q^{\text{inv}_\rho(\delta)}, \qquad n,k \geq 0,$$
and $L_q^{(r)}(n)=\sum_{k=0}^nL^{(r)}_q(n,k)$.  Note that for $n \geq 0$, we have $L^{(r)}_q(n,k)=0$ if $k>n$ or $k<0$.

Define the $q$-\emph{rising factorial} by $[n]_q^{\overline{m}}=\prod_{i=n}^{n+m-1} i_q$ if $m \geq 1$, with $[n]_q^{\overline{0}}=1$ for all $n \geq0$.  Recall also that the $q$-binomial coefficient $\binom{n}{k}_q$ is the distribution function on the set of binary words consisting of $k$ zeros and $n-k$ ones for the statistic which records the number of inversions \cite[Prop. 1.3.17]{St}.

The sequences $L^{(r)}_q(n,k)$ and $L_q^{(r)}(n)$ satisfy the following relations.

\begin{theorem}\label{P2}
If $m,n,r \geq 0$, then
\begin{equation}\label{P2e1}
L^{(r)}_q(m+n,k)=\sum_{i=0}^n \sum_{j=0}^k q^{i(j+m+2r)}[j+m+2r]_q ^{\overline{n-i}}\binom{n}{i}_qL_q^{(r)}(m,j)L_q(i,k-j)
\end{equation}
and
\begin{equation}\label{P2e2}
L^{(r)}_q(m+n)=\sum_{i=0}^n \sum_{j=0}^k q^{i(j+m+2r)}[j+m+2r]_q ^{\overline{n-i}}\binom{n}{i}_qL_q^{(r)}(m,j)L_q(i).
\end{equation}
\end{theorem}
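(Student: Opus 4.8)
The plan is to adapt the combinatorial argument behind Theorem~\ref{P1} to the statistic $\text{inv}_\rho$ on $r$-Lah distributions. As there, \eqref{P2e2} should follow from \eqref{P2e1} by summing over $k$ (or by simply not bounding the number of ``new'' blocks below), so I would concentrate on \eqref{P2e1}. The set-up: fix $\delta\in\mathcal{L}^{(r)}_{m+n,k}$ and record two parameters of it, namely the number $j+r$ of blocks meeting $[m+r]$ (this count always includes the $r$ distinguished blocks containing $1,\dots,r$) and the number $i$ of elements of $I=[m+r+1,m+n+r]$ lying in blocks disjoint from $[m+r]$, so that $n-i$ elements of $I$ lie in blocks meeting $[m+r]$; call the former the \emph{old} blocks and the latter the \emph{new} blocks. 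I would then show that the $(i,j)$ term on the right of \eqref{P2e1} is precisely $\sum_\delta q^{\text{inv}_\rho(\delta)}$, the sum extending over the $\delta\in\mathcal{L}^{(r)}_{m+n,k}$ having these two parameters equal to $i$ and $j$.

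The key structural fact, which lets the statistic split, is that every element of $I$ exceeds every element of $[m+r]$; hence each old block has its least element in $[m+r]$, each new block has its least element in $I$, and therefore in the word $W_1 0\cdots 0 W_{k+r}$ that defines $\text{inv}_\rho(\delta)$ (blocks read in decreasing order of least element) all $k-j$ new blocks precede all $j+r$ old blocks. Writing this word as $N\,0\,O$ with $N=N_1 0\cdots 0 N_{k-j}$ and $O=O_1 0\cdots 0 O_{j+r}$, I would partition its inversions into four mutually exclusive and exhaustive classes: (a) inversions internal to $N$; (b) inversions internal to $O$ between two elements of $[m+r]$ or involving one of the $j+r-1$ zeros of $O$; (c) inversions internal to $O$ involving at least one of the $n-i$ inserted elements of $I$; and (d) inversions straddling $N$ and $O$, including the central $0$. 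Class (a), after the order-preserving relabeling of the $i$ new elements by $[i]$, sums over the possible new structures to $L_q(i,k-j)$; class (b), after restricting the old blocks to their $[m+r]$-entries, sums over the possible old structures to $L_q^{(r)}(m,j)$.

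For class (c) I would insert the $n-i$ old elements of $I$ into the restricted old structure one at a time in increasing order of value: each such element is momentarily the largest letter present and can never become a block minimum, so inserting it leaves the block order unchanged and creates exactly one inversion for every letter lying to its right; summing over the available insertion slots contributes $[m+r+j+r]_q$ for the first insertion, $[m+r+j+r+1]_q$ for the second, and so on, for a total of $[j+m+2r]_q^{\overline{n-i}}$ --- the same bookkeeping that underlies the recurrence for $L_q(n,k)$. For class (d), a zero of $N$ is never the larger entry of a straddling pair, whereas each of the $i$ new elements of $I$ exceeds each of the $m+r$ elements of $[m+r]$, each of the $j+r-1$ zeros of $O$, and the central zero, giving $i(m+r+j+r)=i(j+m+2r)$ inversions and hence the factor $q^{i(j+m+2r)}$; the only remaining straddling pairs are new-$I$ against old-$I$, and since their number depends on nothing but which $i$-element subset of $I$ was chosen to be new, summing $q$ raised to that number over all $\binom{n}{i}$ choices produces $\binom{n}{i}_q$ (the inversion distribution on binary words with $i$ zeros and $n-i$ ones). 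Multiplying the contributions of (a)--(d) gives the $(i,j)$ term, and summing over $i$ and $j$ --- the out-of-range terms vanishing by the conventions on $L_q^{(r)}(m,j)$ and $L_q(i,k-j)$ --- yields \eqref{P2e1}, whence \eqref{P2e2} on summing over $k$.

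I expect the main obstacle to be not any single computation but the careful verification that classes (a)--(d) genuinely are disjoint and exhaust all inversions, and --- more delicately --- that the four pieces of data (the restricted old $r$-Lah structure on $[m+r]$; the choice of which $i$ elements of $I$ are new; the Lah structure carried by those $i$ new elements; and the insertion record of the $n-i$ old elements of $I$) are mutually independent, so that their generating functions simply multiply. In particular one must check that the choice of the new $i$-subset of $I$ affects no inversions other than the new-$I$/old-$I$ straddling pairs --- this is exactly what makes the clean factor $\binom{n}{i}_q$ legitimate --- and that the increasing-order insertion of the old elements of $I$ really does only ever add inversions ``to the right'' without disturbing the block ordering or any previously counted inversion.
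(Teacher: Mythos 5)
Your proposal is correct and matches the paper's proof in essentially every detail: the same conditioning on $i$ and $j$, the same observation that blocks meeting $[m+r]$ come last in the word defining $\text{inv}_\rho$, the same attribution of the factors $L_q^{(r)}(m,j)$, $L_q(i,k-j)$, $q^{i(j+m+2r)}$ (counting the $m+r$ letters and $j+r$ separating zeros), $[j+m+2r]_q^{\overline{n-i}}$ (via one-at-a-time insertion in increasing order), and $\binom{n}{i}_q$ (via the inversion distribution on binary words). Your explicit four-class partition of the inversions is just a more carefully itemized version of the paper's bookkeeping.
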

\begin{proof}
The second identity follows from the first by summing over $k$.  To show \eqref{P2e1}, suppose that $\delta \in \mathcal{L}^{(r)}_{m+n,k}$ is such that the elements of $[m+r]$ occupy exactly $j+r$ blocks, with $n-i$ elements of $I$ also lying in these blocks.  Then there are $L_q^{(r)}(m,j)$ possibilities concerning the placement of the elements of $[m+r]$ and $L_q(i,k-j)$ possibilities for the remaining $i$ elements of $I$.  Note that each of these elements creates an inversion with every element of $[m+r]$, by the ordering of the blocks, as well as an inversion with each zero that precedes a block containing a member of $[m+r]$.  Thus, there are $(m+r)+(j+r)=j+m+2r$ inversions in all created by each of these $i$ elements, which accounts for the $q^{i(j+m+2r)}$ factor.

Let $S=\{a_1<a_2<\cdots<a_{n-i}\}$ denote the subset of $I$ whose elements belong to a block of $\delta$ containing at least one member of $[m+r]$.  We insert the elements of $S$ into the subpartition comprising the members of $[m+r]$, one at a time in increasing order, starting with $a_1$.  Note that there are $1+q+\cdots+q^{j+m+2r-1}=[j+m+2r]_q$ possibilities concerning the placement of $a_1$ since it can be inserted anywhere amongst a sequence consisting of the letters in $[m+r]$ together with $j+r-1$ zeros, with the choice of position creating anywhere from zero to $j+m+2r-1$ inversions.  By similar reasoning, the factor $[j+m+2r+\ell]_q$ accounts for the choice of position of the element $a_{\ell+1}$ for each $0 \leq \ell \leq n-i-1$.

Finally, observe that the number of inversions of $\delta$ (as defined by the $\text{inv}_\rho$ statistic) involving a member of $S$ and a member of $I-S$ is the same as the number of inversions (as defined in the usual sense) in the binary word $w=w_1w_2\cdots w_n$ obtained by setting $w_\ell=1$ if $m+r+\ell \in S$ and $w_\ell=0$ if $m+r+\ell \not \in S$.  This accounts for the $\binom{n}{i}_q$ factor and completes the proof.
\end{proof}

As a consequence, we obtain the following $q$-binomial coefficient identity.

\begin{corollary}\label{P2c1}
If $m,n,k \geq 0$, then
\begin{align}
\binom{m+n}{k}_q&\binom{m+n+1}{n}_q-\binom{m}{k}_q\binom{k+m+n+1}{n}_q\notag\\
&=\sum_{i=1}^n\sum_{j=1}^{k}q^{i(j+m+1)-2j(k-j+1)}\binom{m}{j-1}_q\binom{k+1}{j}_q\binom{i-1}{k-j}_q\binom{m+n+j-i}{n-i}_q.\label{c1e1}
\end{align}
\end{corollary}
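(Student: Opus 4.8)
The plan is to specialize identity \eqref{P2e1} to $r=1$ and substitute closed forms for all the $q$-Lah numbers that occur. The key observation is that for $r=1$ the restriction defining $\mathcal{L}^{(1)}_{n,k}$ is vacuous, since there is only one distinguished element; thus $\mathcal{L}^{(1)}_{n,k}=\mathcal{L}_{n+1,k+1}$ as sets, and since $\text{inv}_\rho$ is computed in the same way on each, $L^{(1)}_q(n,k)=L_q(n+1,k+1)$. Combined with \eqref{lahe1} this gives the closed form $L^{(1)}_q(n,k)=q^{k(k+1)}(n+1)_q!\binom{n}{k}_q/(k+1)_q!$, while \eqref{lahe1} itself provides $L_q(i,k-j)=q^{(k-j)(k-j-1)}i_q!\binom{i-1}{k-j-1}_q/(k-j)_q!$. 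After also writing the rising $q$-factorial as $[j+m+2]_q^{\overline{n-i}}=(j+m+n-i+1)_q!/(j+m+1)_q!$, every ingredient of \eqref{P2e1} at $r=1$ becomes an explicit ratio of $q$-factorials times $q$-binomials.

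Next I would peel off the boundary contributions. Because $L_q(0,\ell)=\delta_{0,\ell}$, the entire $i=0$ part of the right-hand side of \eqref{P2e1} reduces to its $j=k$ summand, namely $[k+m+2]_q^{\overline{n}}L^{(1)}_q(m,k)$; transferring this to the left side and then dividing the whole identity by $C:=q^{k(k+1)}(m+1)_q!\,n_q!/(k+1)_q!$ turns the left side into exactly $\binom{m+n}{k}_q\binom{m+n+1}{n}_q-\binom{m}{k}_q\binom{k+m+n+1}{n}_q$, using only the cancellations $(m+n+1)-n=m+1$ and $(k+m+n+1)-n=k+m+1$. On the right side only $i\ge 1$ now survives, and among those the $j=k$ terms vanish because $L_q(i,0)=0$ for $i\ge 1$; the substitution $j\mapsto j+1$ then brings the summation range into agreement with the double sum of \eqref{c1e1}.

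It then remains to check that $C^{-1}$ times the $(i,\,j-1)$ summand of \eqref{P2e1} equals the $(i,j)$ summand of \eqref{c1e1}. Here the $i_q!$ contained in $\binom{n}{i}_q$ cancels the $i_q!$ of $L_q(i,k-j+1)$; the factor $(m+1)_q!\,n_q!/(k+1)_q!$ coming from $C$ reassembles the remaining $q$-factorials into $\binom{k+1}{j}_q$ and $\binom{m+n+j-i}{n-i}_q$, while $\binom{m}{j-1}_q$ is inherited directly from $L^{(1)}_q(m,j-1)$ and $\binom{i-1}{k-j}_q$ from $L_q(i,k-j+1)$. Finally the three exponents $i(j+m+1)$ (from $q^{i(j+m+2r)}$ at $r=1$ after the shift), $(j-1)j$ (from $L^{(1)}_q(m,j-1)$), and $(k-j+1)(k-j)$ (from $L_q(i,k-j+1)$) sum, after removal of the $k(k+1)$ that is absorbed into $C$, to $i(j+m+1)-2j(k-j+1)$.

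The bulk of the work is substantial but routine $q$-factorial bookkeeping; the only genuinely delicate points are keeping the $q$-exponents and the single index shift $j\mapsto j+1$ straight, and handling the degenerate ranges ($k=0$, $j=1$, $j=k$, and small values of $i$), where one must invoke the stated conventions for $\binom{\cdot}{\cdot}_q$ and for $L_q(\cdot,\cdot)$ — in particular the support conditions $j-1\le m$ and $i-1\ge k-j$, which together force $j+m+1\ge 2j$ and $i\ge k-j+1$ and hence guarantee that the displayed exponent $i(j+m+1)-2j(k-j+1)$ is nonnegative on the nonvanishing terms. I do not expect any real obstacle beyond this.
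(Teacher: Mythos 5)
Your proof is correct and follows essentially the same route as the paper: specialize \eqref{P2e1}, apply the Garsia--Remmel closed form \eqref{lahe1} throughout, peel off the $i=0$ (hence $j=k$) boundary term, and carry out the $q$-factorial bookkeeping, with the exponent simplification $j(j-1)+(k-j+1)(k-j)-k(k+1)=-2j(k-j+1)$ playing the role of the paper's identity $\binom{j}{2}+\binom{k-j}{2}=\binom{k}{2}-j(k-j)$. The only cosmetic difference is that you take $r=1$ and use $L^{(1)}_q(n,k)=L_q(n+1,k+1)$, whereas the paper takes $r=0$ and performs the substitutions $m\mapsto m+1$, $k\mapsto k+1$ at the very end; these amount to the same index shift.
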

\begin{proof}
Applying \eqref{lahe1} to both sides of \eqref{P2e1} when $r=0$ and $m,n,k \geq 1$ (and separating off the $i=0$, $j=k$ term in the sum on the right-hand side) gives
\begin{align}
q^{k(k-1)}&\frac{(m+n)_q!}{k_q!}\binom{m+n-1}{k-1}_q-q^{k(k-1)}\frac{m_q!(k+m+n-1)_q!}{k_q!(k+m-1)_q!}\binom{m-1}{k-1}_q\notag\\
&=\sum_{i=1}^n\sum_{j=1}^{k-1}q^{i(j+m)+2\binom{j}{2}+2\binom{k-j}{2}}\frac{i_q!m_q!(j+m+n-i-1)_q!}{j_q!(k-j)_q!(j+m-1)_q!}\binom{n}{i}_q\binom{m-1}{j-1}_q\binom{i-1}{k-j-1}_q.\notag
\end{align}
Multiply this last equation by $\frac{k_q!}{m_q!n_q!}$, replace $\binom{n}{i}_q$ with $\frac{n_q!}{i_q!(n-i)_q!}$, and rearrange factors on both sides.  The desired result now follows from noting $\binom{j}{2}+\binom{k-j}{2}=\binom{k}{2}-j(k-j)$, canceling $q^{k(k-1)}$ factors on both sides, and finally replacing $k$ with $k+1$ and $m$ with $m+1$.
\end{proof}

\subsection{$q$-Stirling number identities of the first kind}

Let $\mathcal{G}_{n,k}$ denote the set of permutations of $[n]$ having $k$ cycles and $\mathcal{G}_n$ the set of all permutations of $[n]$.  The cardinality of $\mathcal{G}_{n,k}$ is the (signless) Stirling number of the first kind, often denoted $c(n,k)$.  In what follows, we will represent $\pi=C_1/C_2/\cdots/C_k \in \mathcal{G}_{n,k}$ in \emph{standard cycle form}, i.e., $\min C_1<\min C_2<\cdots <\min C_k$, with the smallest element written first within each cycle.

We recall a statistic on $\mathcal{G}_{n,k}$ originally considered by Carlitz \cite{Ca2} and later studied \cite{Sh2}.  First express $\pi=C_1/C_2/\cdots/C_k \in \mathcal{G}_{n,k}$ in standard cycle form.  Then erase the internal dividers and count inversions in the resulting word.  Denote the value so obtained by $\text{inv}_c(\pi)$.
For example, if $\pi \in \mathcal{G}_{7,3}$ has cycles of $(5,7,6)$, $(3,4,1)$ and $(2)$, then $\text{inv}_c(\pi)=3$, the number of inversions in the word $1342576$.
Let $$c_q(n,k)=\sum_{\pi \in \mathcal{G}_{n,k}}q^{\text{inv}_c(\pi)}, \qquad n,k \geq 0.$$
Considering the position of the element $n$ relative to the members of $[n-1]$ gives the recurrence
\begin{equation}\label{cqe1}
c_q(n,k)=c_q(n-1,k-1)+[n-1]_qc_q(n-1,k), \qquad n,k \geq 1,
\end{equation}
with $c_q(n,k)=\delta_{n,k}$ if $n=0$ or $k=0$.

Let $\mathcal{G}_{n,k}^{(r)}$ denote the set of permutations of $[n+r]$ having $k+r$ cycles in which the elements $1,2,\ldots,r$ belong to distinct cycles and let $\mathcal{G}_n^{(r)}=\cup_{k=0}^n\mathcal{G}_{n,k}^{(r)}$.  The cardinality of $\mathcal{G}_{n,k}^{(r)}$ is given by the $r$-Stirling number of the first kind \cite{Br}, which we denote here by $c^{(r)}(n,k)$.  We now consider a $q$-generalization of $c^{(r)}(n,k)$ obtained by letting
$$c_q^{(r)}(n,k)=\sum_{\pi \in \mathcal{G}_{n,k}^{(r)}}q^{\text{inv}_c(\pi)}, \qquad n,k \geq 0.$$

The sequence $c_q^{(r)}(n,k)$ satisfies the following recurrence relation.

\begin{theorem}\label{t3}
If $m,n,r \geq 0$, then
\begin{equation}\label{t3e1}
c_q^{(r)}(m+n,k)=\sum_{i=0}^n \sum_{j=0}^m [m+r]_q^{\overline{n-i}}\binom{n}{i}_q c_q^{(r)}(m,j)c_q(i,k-j)
\end{equation}
and
\begin{equation}\label{t3e2}
\prod_{\ell=0}^{n-1}(1+[\ell+m+r]_q)=\sum_{i=0}^n [m+r]_q^{\overline{n-i}}\binom{n}{i}_q \prod_{\ell=0}^{i-1}(1+\ell_q).
\end{equation}
\end{theorem}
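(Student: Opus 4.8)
The plan is to establish \eqref{t3e1} by a direct weight-preserving argument on the permutations counted by $c_q^{(r)}(m+n,k)$, in the same spirit as the proofs of Theorems~\ref{P1} and~\ref{P2}, and then to obtain \eqref{t3e2} from it by summing over $k$.

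For \eqref{t3e1}, I would decompose an arbitrary $\pi \in \mathcal{G}^{(r)}_{m+n,k}$ according to the number $j+r$ of cycles of $\pi$ that meet $[m+r]$ (the \emph{base} cycles) and the number $n-i$ of elements of $I=[m+r+1,m+n+r]$ lying in a base cycle. The crucial structural point, which explains why no power-of-$q$ correction appears here in contrast to Theorems~\ref{P1} and~\ref{P2}, is that every base cycle has its minimum in $[m+r]$ (these being the $m+r$ smallest elements), so in standard cycle form the base cycles precede all of the remaining $k-j$ cycles in the word whose inversions compute $\text{inv}_c$. I would then read off the four factors of the $(i,j)$-term: (i) deleting from $\pi$ the $n-i$ chosen elements of $I$ that lie in base cycles yields a member of $\mathcal{G}^{(r)}_{m,j}$, and the relative order of the remaining $i$ elements of $I$, which occupy $k-j$ cycles of their own, is an arbitrary element of $\mathcal{G}_{i,k-j}$, contributing $c_q^{(r)}(m,j)\,c_q(i,k-j)$; (ii) reinserting the $n-i$ selected elements of $I$ into the base word one at a time in increasing order --- each insertion going into a gap that cannot be the first entry of a cycle, since the element exceeds all smaller ones already present --- is a bijective scheme whose $\ell$th step offers $m+r+\ell-1$ positions creating $0,1,\dots,m+r+\ell-2$ new inversions, for a cumulative factor $[m+r]_q^{\overline{n-i}}$, and this records exactly the inversions internal to the base part; and (iii) the inversions of $\pi$ between a base-cycle element and one of the remaining $i$ elements occur only between the latter and the \emph{selected} elements of $I$ (an element of $[m+r]$ is both smaller than, and earlier in the word than, every element of $I$, so produces no such inversion), which as in the proof of Theorem~\ref{P2} --- now using that the inversion statistic on binary words of length $n$ is symmetric under reversal --- contributes $\binom{n}{i}_q$. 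Summing over all choices gives \eqref{t3e1}.

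To deduce \eqref{t3e2}, I would sum \eqref{t3e1} over all $k\geq0$. Summing recurrence \eqref{cqe1} over $k$ gives $\sum_{k\geq0}c_q(i,k)=\prod_{\ell=0}^{i-1}(1+\ell_q)$, and the analogous ``insert the maximal element, either as a new fixed point or into an existing cycle'' recurrence for $c_q^{(r)}$ gives $\sum_{k\geq0}c_q^{(r)}(n,k)=\prod_{\ell=0}^{n-1}(1+[\ell+r]_q)$. Substituting these yields
\[
\prod_{\ell=0}^{m+n-1}(1+[\ell+r]_q)=\left(\prod_{\ell=0}^{m-1}(1+[\ell+r]_q)\right)\sum_{i=0}^n [m+r]_q^{\overline{n-i}}\binom{n}{i}_q\prod_{\ell=0}^{i-1}(1+\ell_q),
\]
and dividing by the nonzero polynomial $\prod_{\ell=0}^{m-1}(1+[\ell+r]_q)$, together with the rewriting $\prod_{\ell=m}^{m+n-1}(1+[\ell+r]_q)=\prod_{\ell=0}^{n-1}(1+[\ell+m+r]_q)$, gives \eqref{t3e2}.

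I expect the only real difficulty to lie in the bookkeeping for \eqref{t3e1}: confirming that the four factors jointly account for every inversion of $\pi$ exactly once --- in particular that the insertion factor $[m+r]_q^{\overline{n-i}}$ captures precisely the inversions among the inserted elements and between them and $[m+r]$, that the $\binom{n}{i}_q$ factor captures precisely the base/non-base cross inversions while the $[m+r]$-versus-new inversions genuinely vanish, and that the increasing-order insertion scheme is a genuine bijection so that nothing is over- or under-counted. The reduction from \eqref{t3e1} to \eqref{t3e2} is then routine.
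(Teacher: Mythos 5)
Your proposal is correct and follows essentially the same route as the paper: decompose $\pi\in\mathcal{G}^{(r)}_{m+n,k}$ by the number of cycles meeting $[m+r]$ and the number of elements of $I$ in those cycles, observe that the cycle ordering forces zero inversions between $[m+r]$ and the non-base elements of $I$, account for the remaining inversions via the insertion factor $[m+r]_q^{\overline{n-i}}$ and the cross factor $\binom{n}{i}_q$, and then obtain \eqref{t3e2} by summing over $k$ and cancelling $\prod_{\ell=0}^{m-1}(1+[\ell+r]_q)$. Your explicit remark that the $\binom{n}{i}_q$ factor here counts coinversions rather than inversions of the selection word (harmless by the reversal symmetry of the $q$-binomial distribution) is a detail the paper leaves implicit in its appeal to the proof of Theorem~\ref{P2}.
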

\begin{proof}
For \eqref{t3e1}, we proceed as in prior proofs and consider the number, $n-i$, of elements of $I$ that occupy a cycle containing a member of $[m+r]$.
There are $c_q^{(r)}(m,j)$ possibilities for the placement of the elements of $[m+r]$ for some $j$ and $c_q(i,k-j)$ possibilities for the elements of $I$ not belonging to a cycle containing a member of $[m+r]$.  Note that there are no inversions between these elements of $I$ and elements of $[m+r]$, by the ordering of the cycles.  The factors $[m+r]_q^{\overline{n-i}}$ and $\binom{n}{i}_q$ arise in a similar manner as in the proof of \eqref{P2e1} above, which completes the proof of \eqref{t3e1}.

Next observe that
$$\sum_{k=0}^n c_q^{(r)}(n,k)=\prod_{\ell=r}^{n+r-1}(1+\ell_q), \qquad r \geq 0.$$
This can be seen combinatorially by considering the positions of elements of $[r+1,r+n]$ within a member of $\mathcal{G}_n^{(r)}$.  Note that there are always $1+\ell_q$ possibilities for the position of the element $\ell+1$ for $r \leq \ell \leq r+n-1$ given any permissible arrangement of the members of $[\ell]$.  Thus summing \eqref{t3e1} over $k$ gives
\begin{align}
\prod_{\ell=r}^{m+n+r-1}(1+\ell_q)&=\sum_{i=0}^n\sum_{j=0}^m [m+r]_q^{\overline{n-i}}\binom{n}{i}_q c_q^{(r)}(m,j)\prod_{\ell=0}^{i-1}(1+\ell_q)\notag\\
&=\sum_{i=0}^n[m+r]_q^{\overline{n-i}}\binom{n}{i}_q\prod_{\ell=0}^{i-1}(1+\ell_q)\sum_{j=0}^mc_q^{(r)}(m,j)\notag\\
&=\prod_{\ell=r}^{m+r-1}(1+\ell_q)\sum_{i=0}^n[m+r]_q^{\overline{n-i}}\binom{n}{i}_q\prod_{\ell=0}^{i-1}(1+\ell_q),\notag
\end{align}
which implies \eqref{t3e2}.  Alternatively, using prior reasoning, one can show directly that both sides of \eqref{t3e2} give the total weight with respect to the $\text{inv}_c$ statistic of all the members of $\mathcal{G}_n^{(m+r)}$.
\end{proof}

Using \eqref{cqe1}, one can show that $c_q(n,k)$ is the $(n-k)$-th symmetric function in the quantities $\{1_q,2_q,\ldots,[n-1]_q\}$.  Substituting this into \eqref{t3e1} gives an explicit recurrence for $c_q^{(r)}(a,k)$ in terms of $c_q^{(r)}(b,j)$ for $b<a$ and $j\leq k$ for a fixed $r$.

\subsection{Further $q$-identities}

We have the following additional recurrence relations satisfied by the sequences of the prior sections.

\begin{theorem}\label{t4}
If $m,n,r \geq 0$, then
\begin{equation}\label{t4e1}
S_q^{(m+r)}(n,k)=\sum_{i=0}^n q^{m(i+r)+\binom{m}{2}}m_q^{n-i}\binom{n}{i}S_q^{(r)}(i,k),
\end{equation}
\begin{equation}\label{t4e2}
L_q^{(m+r)}(n,k)=\sum_{i=0}^n q^{m(2i+2r+m-1)}[2m]_q^{\overline{n-i}}\binom{n}{i}_qL_q^{(r)}(i,k),
\end{equation}
and
\begin{equation}\label{t4e3}
c_q^{(m+r)}(n,k)=\sum_{i=0}^n q^{r(n-i)}m_q^{\overline{n-i}}\binom{n}{i}_qc_q^{(r)}(i,k).
\end{equation}
\end{theorem}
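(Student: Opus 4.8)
The plan is to prove \eqref{t4e1}--\eqref{t4e3} combinatorially, using in all three cases the same device employed in the proofs of Theorems~\ref{P1}, \ref{P2} and~\ref{t3}: to show that the $(i)$-term of the sum is the total weight (for the appropriate statistic) of a subfamily of the structures counted by the left side, the subfamily being prescribed by how the free elements interact with the block or cycle structure of the ``extra'' elements $1,\dots,m$. Fix such a structure $\tau$ on $[n+m+r]$, a partition, Lah distribution, or permutation in which $1,\dots,m+r$ lie in distinct blocks or cycles. As in the earlier proofs, since $1,\dots,m+r$ are the smallest $m+r$ elements and lie in distinct parts, each of them is the minimum of its part; let $D_1,\dots,D_m$ be the parts with minima $1,\dots,m$. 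Put $i$ equal to the number of elements of $[m+r+1,m+n+r]$ lying in none of $D_1,\dots,D_m$, so that $n-i$ of the free elements lie in $D_1\cup\cdots\cup D_m$. Deleting $D_1,\dots,D_m$ and relabelling the surviving $r+i$ elements order-preservingly onto $[r+i]$ yields a member of the corresponding $r$-family ($\mathcal{P}^{(r)}_{i,k}$, $\mathcal{L}^{(r)}_{i,k}$, or $\mathcal{G}^{(r)}_{i,k}$); this is invertible once we also record which $n-i$ free elements were deleted and how they sat inside $D_1,\dots,D_m$, which produces the factor $\binom{n}{i}$ (or $\binom{n}{i}_q$) and the factor $S^{(r)}_q(i,k)$, $L^{(r)}_q(i,k)$, or $c^{(r)}_q(i,k)$. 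It remains to see how the weight of $\tau$ decomposes.

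For \eqref{t4e1} the weight bookkeeping is direct. The surviving $r+i$ elements lie in blocks occupying positions $m+1,\dots,m+r+k$ of $\tau$ rather than $1,\dots,r+k$, so each contributes an extra $m$ to $w$, giving $q^{m(i+r)}$; the elements $1,\dots,m$, lying in blocks at positions $1,\dots,m$, contribute $\sum_{t=1}^m(t-1)=\binom m2$; and each of the $n-i$ deleted free elements, when put into $D_t$, adds $t-1$ to $w$, which over $t=1,\dots,m$ gives a factor $m_q$ and hence $m_q^{n-i}$ in all. Since $w$ is insensitive to the relative order of the deleted and surviving free elements, there is no further interaction, which is also why the binomial coefficient here is ordinary rather than $q$-deformed. (Alternatively, \eqref{t4e1} is immediate from \eqref{Pe1} together with $(m+r)_q=m_q+q^m r_q$ and $\binom{m+r}{2}=\binom m2+\binom r2+mr$; I would mention this as a remark.)

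For \eqref{t4e2} and \eqref{t4e3} the same bijection is used, but now one must track the inversion statistic, exactly as in the proofs of \eqref{P2e1} and \eqref{t3e1}. In the Lah case the blocks $D_1,\dots,D_m$, with their separating zeros, form the last $m$ ``words'' of the inversion word, and I would check: (i) in the singleton case one has $\binom m2$ inversions among the $m$ minima (which appear in decreasing order) and $\binom m2$ inversions between those minima and the $m$ trailing zeros, for $m(m-1)$ in all; (ii) each of the $m$ minima and each of the $m$ zeros lies to the right of all $r+i$ positive letters of the surviving word, contributing $2m(i+r)$ more; (iii) for $\ell=1,\dots,n-i$, inserting the $\ell$-th deleted free element (taken in increasing order) into this trailing segment has exactly $2m+\ell-1$ admissible slots whose inversion contributions are $0,1,\dots,2m+\ell-2$, so the insertions together contribute $[2m]_q^{\overline{n-i}}$; and (iv) the only inversions not yet accounted for are those between a deleted and a surviving free element, which over the choice of the deleted set are distributed as $\binom{n}{i}_q$. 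Summing $q$ over $\tau$ then gives $q^{2m(i+r)+m(m-1)}[2m]_q^{\overline{n-i}}\binom{n}{i}_qL^{(r)}_q(i,k)$, and $2m(i+r)+m(m-1)=m(2i+2r+m-1)$, which is \eqref{t4e2}. The argument for \eqref{t4e3} is the same but lighter: here $D_1,\dots,D_m$ are the \emph{first} $m$ cycles $C_1,\dots,C_m$, whose minima $1,\dots,m$ are the smallest elements and so create no inversions; the only forced cross-term is the $r(n-i)$ inversions between each deleted free element and the $r$ surviving minima $m+1,\dots,m+r$; inserting the deleted free elements one at a time into $C_1\cup\cdots\cup C_m$ contributes $m_q^{\overline{n-i}}$ (the $\ell$-th insertion having $m+\ell-1$ slots with contributions $0,\dots,m+\ell-2$ among the preceding elements of those cycles), and the deleted-versus-surviving free element inversions again give $\binom{n}{i}_q$.

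The main difficulty is the inversion bookkeeping in \eqref{t4e2} and \eqref{t4e3}: one has to verify that the slots available when a new free element is inserted into the $D$-parts are precisely those that keep it inside an existing part (rather than starting a new one), that their inversion contributions run over a full interval of consecutive integers so that the product telescopes to the stated $q$-rising factorial, and that the four classes of inversions --- within the $D$-part, within the surviving structure, between the two, and between deleted and surviving free elements --- are genuinely disjoint and exhaustive, so that the constant power of $q$ comes out to be exactly $q^{m(2i+2r+m-1)}$, respectively $q^{r(n-i)}$. Once the slot counts and their weight profiles are pinned down, summing the resulting expression over all $\tau$ gives the three identities.
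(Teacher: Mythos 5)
Your proof is correct and follows essentially the same approach as the paper: the paper proves only \eqref{t4e3} by exactly this decomposition (splitting $I$ according to which elements join the parts containing $[m]$, with the factors $m_q^{\overline{n-i}}$, $\binom{n}{i}_q$ and $q^{r(n-i)}$ arising as you describe) and leaves \eqref{t4e1} and \eqref{t4e2} as exercises, which you have worked out along the intended lines. The only quibbles are cosmetic: in \eqref{t4e3} the minima $1,\dots,m$ do occur as the smaller entry of inversions with deleted free elements inserted to their left (these are correctly absorbed into your slot counts), and in \eqref{t4e2} the ``$m$ trailing zeros'' should be read as $m-1$ separating zeros plus the zero preceding $D_m$ when $k+r\geq 1$.
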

\begin{proof}
The proofs of these identities are similar, so we only show \eqref{t4e3} and leave the other two as exercises.  We argue that the right-hand side of \eqref{t4e3} gives the total weight with respect to the $\text{inv}_c$ statistic of all the members of $\mathcal{G}_{n,k}^{(m+r)}$.  Consider the number, $n-i$, of elements of $I$ which belong to a cycle containing a member of $[m]$.  For these elements, there are $m_q^{\overline{n-i}}$ possibilities.  The remaining $i$ elements of $I$, when taken together with $[m+1,m+r]$, comprise a member of $\mathcal{G}_{i,k}^{(r)}$ and thus there are $c_q^{(r)}(i,k)$ possibilities concerning their placement. The factor $\binom{n}{i}_q$ arises in much the same way as before since the $n-i$ letters of $I$ belonging to the first $m$ cycles can create inversions with letters of $I$ that do not.  Furthermore, these $n-i$ letters of $I$ create $r(n-i)$ additional inversions with the members of $[m+1,m+r]$, by the ordering of cycles, whence the factor of $q^{r(n-i)}$. Summing over all possible $i$ gives \eqref{t4e3}.
\end{proof}

Further identities may be obtained by summing those in the preceding theorem over $k$.  For example, summing both sides of \eqref{t4e3} over $k$ gives
$$\prod_{\ell=0}^{n-1}(1+[\ell+m+r]_q)=\sum_{i=0}^nq^{r(n-i)}m_q^{\overline{n-i}}\binom{n}{i}_q\prod_{\ell=0}^{i-1}(1+[\ell+r]_q),$$
which may be rewritten, upon multiplying both sides by $\prod_{\ell=0}^{m-1}(1+[\ell+r]_q)$, as follows.

\begin{corollary}\label{t4c1}
If $m,n,r \geq 0$, then
\begin{equation}\label{t4c1e1}
\prod_{\ell=0}^{m+n-1}(1+[\ell+r]_q)=\sum_{i=0}^n\sum_{j=0}^mq^{r(n-i)}m_q^{\overline{n-i}}\binom{n}{i}_qc_q^{(r)}(m,j)\prod_{\ell=0}^{i-1}(1+[\ell+r]_q).
\end{equation}
\end{corollary}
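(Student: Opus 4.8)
The plan is to derive \eqref{t4c1e1} from \eqref{t4e3} by summing over $k$ and then multiplying by a product factor, exactly as outlined in the paragraph preceding the statement; the work is bookkeeping rather than anything substantive. First I would isolate the summation formula established inside the proof of Theorem \ref{t3}: for every $s \geq 0$ and $N \geq 0$,
$$\sum_{k=0}^{N} c_q^{(s)}(N,k)=\prod_{\ell=0}^{N-1}\bigl(1+[\ell+s]_q\bigr),$$
which holds because, for $0 \leq \ell \leq N-1$ and any admissible arrangement of $1,2,\ldots,\ell+s$, the element $\ell+1+s$ can be placed in $1+[\ell+s]_q$ ways inside a member of $\mathcal{G}_N^{(s)}$ (the summand $1$ corresponding to opening a new cycle, the term $[\ell+s]_q$ to insertion into an existing cycle, each choice of position there creating from $0$ to $\ell+s-1$ new inversions).

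Next I would sum \eqref{t4e3} over $k$ from $0$ to $n$. The left side becomes $\prod_{\ell=0}^{n-1}(1+[\ell+m+r]_q)$ by the formula above with $s=m+r$, $N=n$. On the right side the only $k$-dependence sits in $c_q^{(r)}(i,k)$, and $\sum_{k} c_q^{(r)}(i,k)=\prod_{\ell=0}^{i-1}(1+[\ell+r]_q)$; hence
$$\prod_{\ell=0}^{n-1}(1+[\ell+m+r]_q)=\sum_{i=0}^{n} q^{r(n-i)}m_q^{\overline{n-i}}\binom{n}{i}_q\prod_{\ell=0}^{i-1}(1+[\ell+r]_q).$$
Finally I would multiply both sides by $\prod_{\ell=0}^{m-1}(1+[\ell+r]_q)$. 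On the left, reindexing the product $\prod_{\ell=0}^{n-1}(1+[\ell+m+r]_q)$ as $\prod_{\ell=m}^{m+n-1}(1+[\ell+r]_q)$ lets it merge with $\prod_{\ell=0}^{m-1}(1+[\ell+r]_q)$ into $\prod_{\ell=0}^{m+n-1}(1+[\ell+r]_q)$. On the right, I would re-expand the multiplier by the summation formula once more (with $s=r$, $N=m$) as $\prod_{\ell=0}^{m-1}(1+[\ell+r]_q)=\sum_{j=0}^{m} c_q^{(r)}(m,j)$ and bring it inside the $i$-sum, producing precisely the double sum in \eqref{t4c1e1}.

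I do not anticipate a genuine obstacle here. The only points deserving a moment's care are the reindexing that fuses the two $q$-rising products on the left, and the fact that the auxiliary summation formula $\sum_{k} c_q^{(s)}(N,k)=\prod_{\ell=0}^{N-1}(1+[\ell+s]_q)$ is valid for all $s \geq 0$, not merely for $s=0$, which it is by the insertion argument recalled above. One could alternatively aim for a self-contained combinatorial proof by checking, in the spirit of the proof of \eqref{t3e1}, that both sides of \eqref{t4c1e1} enumerate the members of $\mathcal{G}_{m+n}^{(r)}$ by their $\text{inv}_c$-weight; I would expect matching the $m_q^{\overline{n-i}}$ and $q^{r(n-i)}$ factors to a suitable decomposition of such permutations to be the only slightly delicate step of that route, which is why I would favor the short algebraic argument above.
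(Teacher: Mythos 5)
Your proposal is correct and follows essentially the same route as the paper: the paper likewise sums \eqref{t4e3} over $k$ using the identity $\sum_k c_q^{(s)}(N,k)=\prod_{\ell=0}^{N-1}(1+[\ell+s]_q)$ from the proof of Theorem \ref{t3}, then multiplies both sides by $\prod_{\ell=0}^{m-1}(1+[\ell+r]_q)$ and re-expands that factor as $\sum_{j=0}^m c_q^{(r)}(m,j)$ to obtain the double sum. Your bookkeeping (the reindexing that fuses the two products on the left, and the validity of the summation formula for all $s\geq 0$) matches the paper's intended derivation exactly.
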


\begin{remark}
Taking $q=1$ in \eqref{t4c1e1} recovers formula \eqref{It1e2} of Mez\H{o}.
\end{remark}

\section{Combinatorial proof of a prior recurrence}

Xu \cite{Xu} extended Spivey's recurrence to the generalized Stirling numbers of Hsu and Shiue \cite{HS}.  Note that these Stirling numbers have as special cases several earlier generalizations, including those of Carlitz \cite{Ca3,Ca4}, Gould and Hopper \cite{GH}, and Howard \cite{Ho}.  The proof given by Hsu and Shiue is algebraic and makes careful use of exponential generating functions.  Here, we provide a combinatorial proof of their result as well as a further refinement.

To do so, we consider statistics on an extension of the set $\mathcal{L}_{n,k}$.  We first recall two statistics on $\mathcal{L}_{n,k}$ from \cite{MSS}.
\begin{definition}\label{d1}
If $\rho \in \mathcal{L}_{n,k}$ and $i \in [n]$, then we say that $i$ is a record low of $\rho$ if there are no elements $j<i$ to the left of $i$ within its block in $\lambda$. \end{definition}
For example, if $\rho=\{4,1,5\},\{8,3,6,2\},\{7\} \in \mathcal{L}_{8,3}$, then the elements $4$ and $1$ are record lows in the first block, $8$, $3$ and $2$ are record lows in the second, and $7$ is a record low in the third block for a total of six record lows altogether.  Note that the smallest element within a block as well as the left-most one are always record lows.
\begin{definition}\label{d2}
Given $\rho \in \mathcal{L}_{n,k}$, let $rec^*(\rho)$ denote the number of record lows of $\rho$ which are not themselves the smallest member of a block.  Let $nrec(\rho)$ denote the number of elements of $[n]$ which are not record lows of $\rho$.
\end{definition}

We now consider an extension of the set $\mathcal{L}_{n}=\cup_{k=0}^n\mathcal{L}_{n,k}$ consisting of all Lah distributions of size $n$.  We assume throughout the remainder of this section that the blocks within a member of $\mathcal{L}_{n}$ are arranged from left to right in increasing order according to the size of the smallest element.
\begin{definition}\label{d3}
Let us call an element $i$ special within $\rho \in \mathcal{L}_{n}$ if $i=1$ or if $i\geq 2$ and the following two conditions are satisfied:
\begin{item}
(1) $i$ is not the smallest element of a block of $\rho$, and

(2) all elements of  $[i-1]$ occur to the left of $i$ in a left-to-right scan of the contents of the blocks of $\rho$.
\end{item}
\end{definition}

\begin{definition}\label{d4}
Let $\mathcal{L}_{n}^*$ denote the set obtained from $\mathcal{L}_{n}$ by circling some subset (possibly empty) of the special elements within each member of $\mathcal{L}_{n}$, where the element $1$ can be circled only if it starts a block. \end{definition}

We will refer to the members of $\mathcal{L}_n^*$ as \emph{extended Lah distributions}. Let us denote a circled element $i$ within a block by $\circled{i}$. Note that conditions (1) and (2) in Definition \ref{d3} above imply that $\circled{i}$ cannot start a block if $i \geq 2$, with $\circled{1}$ always starting a block when it occurs.  Otherwise, elements may be ordered in any way within blocks such that conditions (1) and (2) are met regarding the positions of any circled elements.

We consider a refinement of the set $\mathcal{L}_n^*$.

\begin{definition}\label{d5}
By a true block within $\lambda\in \mathcal{L}_n^*$, we will mean a block not containing $\circled{1}$ when $1$ is circled and any block of $\lambda$ when it is not.
Let $\mathcal{L}_{n,k}^*$ denote the subset of $\mathcal{L}_{n}^*$ whose members contain exactly $k$ true blocks.
\end{definition}
For example, we have $$\lambda =\{\circled{1},3,\circled{2}\}, \{4,\circled{5},7\}, \{13,6,8,\circled{9}\}, \{12,11,10,14,\circled{15}\} \in \mathcal{L}_{15,3}^*,$$
where the first block is not true and the elements $8$ and $14$ are special but not circled.  By the definitions, we have $\mathcal{L}_n^*=\cup_{k=0}^n \mathcal{L}_{n,k}^*$.  Furthermore, $|\mathcal{L}_{n,0}^*|=n!$ since such partitions contain a single block starting with $\circled{1}$, with each element $i$, $i \geq 2$, either uncircled and directly following some member (possibly circled) of $[i-1]$  or circled and occurring in the $i$-th position from the left.  Note that $\mathcal{L}_{n,k}$ corresponds to the subset of $\mathcal{L}_{n,k}^*$ in which no special elements are circled.

We now extend the $\text{rec}^{*}$ and $\text{nrec}$ statistics to $\mathcal{L}_{n,k}^*$ by disregarding any circled members of $[n]$ in true blocks when obtaining the values for $\text{rec}^{*}$ and $\text{nrec}$.  That is, within each true block, we only consider the sublist comprising the uncircled elements in the block when determining the contribution of that block towards $\text{rec}^*$ or $\text{nrec}$.  For a block containing $\circled{1}$, we consider the sublist of uncircled elements with $1$ written at the front of it.  That is, for a block containing $\circled{1}$, we require that it contribute zero towards $\text{rec}^*$ and contribute $p$ towards the $\text{nrec}$ value, where $p$ denotes the number of uncircled elements in the block.

Note that whether or not an uncircled element within a true block is a record low is independent of any circled elements belonging to the block since special elements greater than one cannot start blocks.

Given $\lambda \in \mathcal{L}_{n,k}^*$, let $\text{circ}(\lambda)$ denote the number of circled elements of $\lambda$.
\begin{definition}\label{d6}
If $\lambda \in \mathcal{L}_{n,k}^*$, then define the weight of $\lambda$ by
$$w(\lambda)=\alpha^{\text{nrec}(\lambda)}\beta^{\text{rec}^*(\lambda)}r^{\text{circ}(\lambda)}.$$
\end{definition}
For example, if $\lambda \in \mathcal{L}_{15,3}^*$ is as above, then we have $\text{nrec}(\lambda)=4$ as there is an uncircled element that is not a record low
in each block (namely, $3$, $7$, $8$ and $14$), $\text{rec}^*(\lambda)=3$ (corresponding to $13$, $12$ and $11$) and $\text{circ}(\lambda)=5$, which implies $w(\lambda)=\alpha^4\beta^3r^5$.

We now recall the generalized Stirling numbers of Hsu and Shiue \cite{HS}.  Given a non-negative integer $k$ and increment $\theta$, let
$$(x)^{(k,\theta)}=x(x-\theta)\cdots (x-k\theta+\theta), \qquad k \geq 1,$$
with $(x)^{(0,\theta)}=1$.  If $n$, $k$ and $r$ are non-negative integers, then the numbers $S(n,k;\alpha,\beta,r)$ are defined as the connection constants in the polynomial identities
\begin{equation}\label{genrec}
(x)^{(n,-\alpha)}=\sum_{k=0}^n S(n,k;\alpha,\beta,r)(x-r)^{(k,\beta)}.
\end{equation}
Note that $S(n,k;\alpha,\beta,r)$ reduces to the Stirling number of the second kind $S(n,k)$ when $\alpha=r=0$ and $\beta=1$ and to the Lah number $L(n,k)$ when $r=0$ and $-\alpha=\beta=1$.

The following lemma provides a combinatorial interpretation of $S(n,k;\alpha,\beta,r)$ as a weighted sum.

\begin{lemma}\label{genl1}
If $n,k,r \geq 0$, then
\begin{equation}\label{genl1e1}
S(n,k;\alpha,\beta,r)=\sum_{\lambda \in \mathcal{L}_{n,k}^*} w(\lambda).
\end{equation}
\end{lemma}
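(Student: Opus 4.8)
The plan is to prove \eqref{genl1e1} by showing that the two sides satisfy the same recurrence in $n$ and the same boundary data, then inducting on $n$. So the first step is to extract a recurrence for $S(n,k;\alpha,\beta,r)$ from the defining relation \eqref{genrec}, and the second is to verify that the weighted sums $T(n,k):=\sum_{\lambda\in\mathcal{L}_{n,k}^*}w(\lambda)$ satisfy it combinatorially.

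\emph{Algebraic recurrence.} Writing $(x)^{(n,-\alpha)}=(x)^{(n-1,-\alpha)}\bigl(x+(n-1)\alpha\bigr)$ and using $(x-r)^{(j+1,\beta)}=(x-r)^{(j,\beta)}\bigl(x-r-j\beta\bigr)$ to get $\bigl(x+(n-1)\alpha\bigr)(x-r)^{(j,\beta)}=(x-r)^{(j+1,\beta)}+\bigl(r+j\beta+(n-1)\alpha\bigr)(x-r)^{(j,\beta)}$, one substitutes $(x)^{(n-1,-\alpha)}=\sum_j S(n-1,j;\alpha,\beta,r)(x-r)^{(j,\beta)}$ and compares the coefficient of $(x-r)^{(k,\beta)}$ on both sides of \eqref{genrec} to obtain
$$S(n,k;\alpha,\beta,r)=S(n-1,k-1;\alpha,\beta,r)+\bigl(r+k\beta+(n-1)\alpha\bigr)S(n-1,k;\alpha,\beta,r),\qquad n\geq1,$$
with $S(0,k;\alpha,\beta,r)=\delta_{0,k}$ and $S(n,k;\alpha,\beta,r)=0$ for $k<0$ or $k>n$. (For $\alpha=r=0$, $\beta=1$ this is the usual recurrence for $S(n,k)$.)

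\emph{Combinatorial recurrence.} The boundary data for $T$ is clear: $\mathcal{L}_{0,k}^*$ is empty unless $k=0$, when it consists of the empty distribution of weight $1$, and the number of true blocks of any member of $\mathcal{L}_n^*$ is at most $n$; thus $T(0,k)=\delta_{0,k}$ and $T(n,k)=0$ for $k<0$ or $k>n$. For $n\geq1$, classify $\lambda\in\mathcal{L}_{n,k}^*$ by the status of its largest element $n$ and delete $n$ to land in $\mathcal{L}_{n-1,\bullet}^*$. The point used throughout is that, $n$ being the maximum, removing it alters neither the record-low status nor the block-minimum status of any other element, nor the ordering of blocks by minimum, nor the \emph{special} status of any circled element; so the deleted object lies in $\mathcal{L}_{n-1,\bullet}^*$ and its weight is $w(\lambda)$ divided by the local contribution of $n$. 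Three cases: (i) if $n$ is circled it is special, hence not a block minimum and preceded by all of $[n-1]$, so it must be the last element of the last block; deletion preserves the $k$ true blocks and drops $\text{circ}$ by one, and every $\lambda'\in\mathcal{L}_{n-1,k}^*$ has a unique such preimage, giving $r\,T(n-1,k)$. (ii) If $n$ is an uncircled singleton block, that block is true, so deletion is a weight-preserving bijection with $\mathcal{L}_{n-1,k-1}^*$, giving $T(n-1,k-1)$. (iii) If $n$ is uncircled in a larger block, it either begins one of the $k$ true blocks (it cannot precede $\circled{1}$), becoming a record low that is not a block minimum and contributing a factor $\beta$, or it follows one of the $n-1$ existing elements, becoming a non-record-low and contributing a factor $\alpha$; counting the $k$ and $n-1$ slots gives $\bigl(k\beta+(n-1)\alpha\bigr)T(n-1,k)$. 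Adding the three, $T(n,k)=T(n-1,k-1)+\bigl(r+k\beta+(n-1)\alpha\bigr)T(n-1,k)$, which matches the recurrence above, and the lemma follows.

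The main obstacle is the case analysis for $n$, especially case (iii) and the bookkeeping around $\circled{1}$: one must keep the non-true block containing $\circled{1}$ — whose initial slot is forbidden and which feeds $\text{nrec}$ but never $\text{rec}^*$ — separate from the $k$ true blocks, so the slot counts are $k$ (factor $\beta$) and $n-1$ (factor $\alpha$), and one must check that every such insertion really produces an element of $\mathcal{L}_{n,k}^*$ — that no circled element loses special status and that re-sorting blocks by minimum is unaffected — which is precisely where maximality of $n$ is used.
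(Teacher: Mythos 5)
Your proposal is correct and follows essentially the same route as the paper: derive the recurrence $S(n,k;\alpha,\beta,r)=S(n-1,k-1;\alpha,\beta,r)+(\alpha(n-1)+\beta k+r)S(n-1,k;\alpha,\beta,r)$ from \eqref{genrec}, then verify the weighted sums satisfy it by classifying the position and circled status of the element $n$, with the same three cases (own true block; uncircled at the start of a true block or following one of the $n-1$ elements; circled at the end of the right-most block). Your treatment is somewhat more explicit about the algebraic derivation and the $\circled{1}$ bookkeeping, but there is no substantive difference.
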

\begin{proof}
Using \eqref{genrec}, one can show
\begin{equation}\label{genl1e2}
S(n,k;\alpha,\beta,r)=S(n-1,k-1;\alpha,\beta,r)+(\alpha(n-1)+\beta k + r)S(n-1,k;\alpha,\beta,r), \qquad n\geq 1, ~k \geq 0,
\end{equation}
with $S(n,k;\alpha,\beta,r)=0$ if $0 \leq n <k$ and $1$ if $n=k=0$.  Formula \eqref{genl1e1} now follows by an inductive argument using \eqref{genl1e2} upon considering the position of the element $n$ within a member $\mathcal{L}_{n,k}^*$ for $n >1$ (the formula is easily seen to hold for $n=0,1$).  Note that if $n$ occupies its own (true) block, then there are $S(n-1,k-1;\alpha,\beta,r)$ possibilities.  On the other hand, if $n$ occupies any block containing at least one member of $[n-1]$, then $n$ adds one to the $\text{nrec}$ value if it directly follows some member of $[n-1]$ within the block (and itself is not circled, though its predecessor may be circled) and adds one to the $\text{rec}^*$ value if it comes at the beginning of a true block containing at least one member of $[n-1]$ (since in that case it would be a record low that is not minimal).  The final option, which increases $\text{circ}$ by one, but leaves the other two statistics unchanged as well as the number of true blocks, is for $n$ to be circled, which implies that it must occur as the last element of the right-most block, with this block not a singleton.  Combining these last three cases gives the second term on the right-hand side of \eqref{genl1e2} and completes the proof.
\end{proof}

Define the generalized Bell polynomial \cite{Xu} by
$$B_{n;\alpha,\beta,r}(x)=\sum_{k=0}^n S(n,k;\alpha,\beta,r)x^k, \qquad n \geq 0.$$

We now can give a combinatorial proof of the recurrence for $B_{n;\alpha,\beta,r}(x)$ shown in \cite{Xu}.

\begin{theorem}\label{t5}
If $m,n,r \geq 0$, then
\begin{equation}\label{t5e1}
S(m+n,k;\alpha,\beta,r)=\sum_{i=0}^n \sum_{j=0}^m \binom{n}{i}S(m,j;\alpha,\beta,r)S(i,k-j;\alpha,\beta,r)\prod_{\ell=0}^{n-i-1}((m+\ell)\alpha+j\beta)
\end{equation}
and
\begin{equation}\label{t5e2}
B_{m+n;\alpha,\beta,r}(x)=\sum_{i=0}^n \sum_{j=0}^m \binom{n}{i}x^jS(m,j;\alpha,\beta,r)B_{i;\alpha,\beta,r}(x)\prod_{\ell=0}^{n-i-1}((m+\ell)\alpha+j\beta).
\end{equation}
\end{theorem}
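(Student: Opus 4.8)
The plan is to establish \eqref{t5e1} combinatorially using the interpretation of $S(n,k;\alpha,\beta,r)$ as a weighted sum over extended Lah distributions provided by Lemma \ref{genl1}, and then to obtain \eqref{t5e2} from it by a routine generating-function manipulation. Writing $I=[m+1,m+n]$, the idea is to show that the right-hand side of \eqref{t5e1} computes $\sum_{\lambda\in\mathcal{L}_{m+n,k}^*}w(\lambda)$ by sorting the members $\lambda$ of $\mathcal{L}_{m+n,k}^*$ according to the number $i$ of elements of $I$ that lie in blocks consisting entirely of elements of $I$, together with the number $j$ of true blocks of $\lambda$ that meet $[m]$.

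First I would exhibit the canonical decomposition of such a $\lambda$ into three independent pieces: (i) the extended Lah distribution on $[m]$ obtained by deleting from the $[m]$-meeting blocks all elements of $I$ together with their circles, which lies in $\mathcal{L}_{m,j}^*$ and hence is counted by $S(m,j;\alpha,\beta,r)$; (ii) the union of the blocks of $\lambda$ containing no element of $[m]$, which after the order-preserving relabeling of its $i$ elements to $[i]$ is a member of $\mathcal{L}_{i,k-j}^*$ and so is counted by $S(i,k-j;\alpha,\beta,r)$; and (iii) the choice of the $n-i$ remaining elements of $I$, accounting for the factor $\binom{n}{i}$. Here one must verify that the statistics split additively over the decomposition: an element of $I$ lying in an $[m]$-meeting block is never circled, so it contributes nothing to $\text{circ}$; deleting the necessarily larger elements of $I$ from an $[m]$-meeting block changes neither the record-low status of the elements of $[m]$ nor the identity of the minimal element of the block; and, since every block meeting $[m]$ precedes every pure-$I$ block in the left-to-right ordering, an element of a pure-$I$ block is special in $\lambda$ if and only if its relabeled copy is special in piece (ii), with the same circling options.

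Next I would account for the product $\prod_{\ell=0}^{n-i-1}((m+\ell)\alpha+j\beta)$ by re-inserting the $n-i$ deleted elements of $I$ into piece (i), one at a time in increasing order. When the $(\ell+1)$-st of them is inserted, the current structure carries $m+\ell$ elements arranged in $j$ true blocks together with the non-true block if $1$ is circled, and since the element being inserted exceeds everything present it may be placed either directly after one of the $m+\ell$ elements, in which case it fails to be a record low and adds one to $\text{nrec}$ (the summand $(m+\ell)\alpha$), or at the front of one of the $j$ true blocks, in which case it is a non-minimal record low and adds one to $\text{rec}^*$ (the summand $j\beta$). It cannot start a new block without changing $j$, and it cannot be circled, since a circled element must occur as the last entry of the right-most block at the moment it is the overall maximum. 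Multiplying the four contributions and summing over $0\le i\le n$ and $0\le j\le m$ gives \eqref{t5e1}, with the boundary cases $m=0$ and $n=0$ checked directly. Finally, \eqref{t5e2} follows by multiplying \eqref{t5e1} through by $x^k$, summing over $k$, and writing $x^k=x^jx^{k-j}$ so that $\sum_k S(i,k-j;\alpha,\beta,r)x^{k-j}=B_{i;\alpha,\beta,r}(x)$ while the left side collapses to $B_{m+n;\alpha,\beta,r}(x)$.

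I expect the main obstacle to be the bookkeeping asserted in the two middle paragraphs, namely that $\text{nrec}$, $\text{rec}^*$, and $\text{circ}$ are genuinely additive across the three pieces and across the re-insertion process; in particular one must track the edge behavior at the circled element $1$ so that the number of true blocks is correctly apportioned between pieces (i) and (ii), and confirm that inserting an element at the front of a block disturbs neither the block ordering nor the record-low status of the elements already present.
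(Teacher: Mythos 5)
Your overall strategy---decomposing $\lambda\in\mathcal{L}_{m+n,k}^*$ into an $[m]$-part, a choice and one-at-a-time insertion of $n-i$ elements of $I$, and a residual part counted by $S(i,k-j;\alpha,\beta,r)$---is the same as the paper's, and your accounting for $\binom{n}{i}$, for the product $\prod_{\ell=0}^{n-i-1}((m+\ell)\alpha+j\beta)$, and for the passage from \eqref{t5e1} to \eqref{t5e2} is sound. But there is a genuine gap at the claim that ``an element of $I$ lying in an $[m]$-meeting block is never circled.'' This is false: a circled $c\in I$ only requires that all of $[c-1]$ lie to its left in the left-to-right scan, and since every block meeting $[m]$ precedes every pure-$I$ block, this can happen with $c$ sitting near the end of the last $[m]$-meeting block. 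For instance, with $m=2$ and $n=3$, the distribution $\{1,2,3,\circled{4}\},\{5\}$ is a legitimate member of $\mathcal{L}_{5,2}^*$ with $4\in I$ circled inside a block meeting $[2]$. Your decomposition loses the factor $r^{\mathrm{circ}}$ carried by such elements, and your re-insertion step cannot recreate them (you explicitly forbid circling during insertion). Dually, your piece (ii) can never produce a member of $\mathcal{L}_{i,k-j}^*$ in which $1$ is circled, since the relabelled minimum of the first pure-$I$ block is a block minimum of $\lambda$ and hence uncircled; so the summand $S(i,k-j;\alpha,\beta,r)$ on the right-hand side is not fully realized by your construction. These two discrepancies are exactly what must be matched up, and doing so is the crux of the paper's argument.

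The paper resolves this with a more careful split: there, $i$ counts all elements of $H=[m+1,m+n]$ \emph{except} the $n-i$ uncircled ones lying in the $[m]$-meeting blocks to the left of every circled element of $H$ in those blocks. One first observes that any circled element of $H$ inside an $[m]$-meeting block must lie in the last such block (the $j$-th true block, or the block containing $\circled{1}$ when $j=0$); letting $c$ be the smallest such circled element, the tail of that block from $\circled{c}$ onward is broken off and becomes the first, non-true block of $\tau$, with $c$ relabelling to $\circled{1}$. With this convention $\tau$ ranges over all of $\mathcal{L}_{i,k-j}^*$ including the circled-$1$ members, the statistics $\mathrm{nrec}$, $\mathrm{rec}^*$ and $\mathrm{circ}$ add across the two pieces (the uncircled elements following $\circled{c}$ contribute $0$ to $\mathrm{rec}^*$ on both sides), and the correspondence becomes a weight-preserving bijection. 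You would need to add this case analysis for your proof to be complete; the remaining steps of your argument are correct as stated.
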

\begin{proof}
We first show \eqref{t5e1}.  The recurrence is easily verified if $m=0$ or $n=0$, so we will assume $m,n \geq 1$.  Let $\mathcal{L}_{m+n,k}^*(i,j)$ denote the subset of $\mathcal{L}_{m+n,k}^*$ whose members $\lambda$ satisfy the following two properties:

(i) the subpartition of $\lambda$ consisting of the elements of $[m]$ forms a member of $\mathcal{L}_{m,j}^*$, and

(ii) there are exactly $n-i$ uncircled elements of $H=[m+1,m+n]$ occurring in the blocks of the subpartition described in (i) and to the left of any circled elements of $H$ occurring within these blocks.

Note that any circled elements of $H$ lying within the first $j$ true blocks of $\lambda$ (or in the block containing $\circled{1}$, if it occurs) must belong to the $j$-th true block if $j \geq 1$ or to the block containing $\circled{1}$ if $j=0$.  We will show that the $i,j$ term in the sum on the right-hand side of \eqref{t5e1} gives $\sum_{\lambda \in \mathcal{L}_{m+n,k}^*(i,j)}w(\lambda)$, whence the result follows from summing over all $i$ and $j$ and appealing to Lemma \ref{genl1}.

To do so, let us express $\lambda \in \mathcal{L}_{m+n,k}^*(i,j)$ as $\lambda=(\sigma,\tau)$, where $\sigma$ and $\tau$ are smaller extended Lah distributions defined as follows.  The distribution $\sigma$ is obtained by taking the subpartition of $\lambda$ of size $m+n-i$ consisting of the elements of $[m]$, together with the $n-i$ elements of $H$ described in condition (ii) above.  The distribution $\tau$ consists of the remaining elements of $[m+n]-\sigma$ and is obtained as follows.  If the $j$-th true block of $\lambda$ (which we take to be the block containing $\circled{1}$ if $j=0$) contains no circled elements of $H$, then $\tau$ is obtained by taking the last $k-j$ blocks of $\lambda$, in which case $\tau$ is either empty or all of its blocks are true.  Otherwise, suppose that the $j$-th block of $\lambda$ contains at least one circled element of $H$, and let $c$ denote the smallest such element.  In this case, $\tau$ is non-empty and is obtained by letting its first block comprise all of the elements (in order) to the right of and including $\circled{c}$ within the $j$-th block of $\lambda$ and letting the $k-j$ remaining blocks of $\lambda$ form the true blocks of $\tau$.  Note that $c$ is the smallest element of $\tau$ since all letters to the right of $c$ in $\lambda$ are larger than $c$, since $c$ is circled.

Next observe that given $(\sigma,\tau)$ as defined above, one can reconstruct $\lambda$, and thus the mapping $\lambda \mapsto (\sigma,\tau)$ is a bijection.  Furthermore, all true blocks of $\tau$ remain true when $\tau$ is combined with $\sigma$ to form $\lambda$, with the values of the $\text{nrec}$, $\text{rec}^*$ and $\text{circ}$ statistics on $\sigma$ and $\tau$ adding to yield the respective values for $\lambda$.  Note concerning $\text{rec}^*$ that an uncircled element to the right of $\circled{c}$ in the $j$-th block of $\lambda$, where $c$ is as described above, cannot contribute to the $\text{rec}^*$ value of $\lambda$ since there is at least one element of $[m]$ to the left of $\circled{c}$ in this block.  In this case, then the first block of the corresponding $\tau$ is not true and thus contributes zero to the $\text{rec}^*$ value of $\tau$.

Now observe that there are $\binom{n}{i}S(m,j;\alpha,\beta,r)\prod_{\ell=0}^{n-i-1}((m+\ell)\alpha+j\beta)$
possibilities for $\sigma$.  To see this, simply arrange the elements of $[m]$ according to some $\rho\in\mathcal{L}_{m,j}^*$, select $n-i$ elements of $H$ in $\binom{n}{i}$ ways, and then add them as uncircled elements to $\rho$, starting with the smallest.  Note that an added element of $H$ to $\rho$ may follow any member (possibly circled) of $[m]$, or any previously added element, or come at the beginning of any of the true blocks of $\rho$.

Upon relabeling elements as members in $[i]$, we see that there are $S(i,k-j;\alpha,\beta,r)$ possibilities for $\tau$, which follows from how the distribution $\tau$ was obtained from $\lambda$.  Since $\tau$ may be chosen to be any member of $\mathcal{L}_{i,k-j}^*$ once $\sigma$ is known, it follows by multiplication that the weight of all members of $\mathcal{L}_{m+n,k}^*(i,j)$ is given by the $i,j$ term of the sum, as desired, which finishes the proof of \eqref{t5e1}.  Summing over $k$, or allowing $\tau$ to contain any number of true blocks in the preceding argument, gives the $x=1$ case of \eqref{t5e2}.  Adding a variable $x$ marking the number of true blocks yields \eqref{t5e2} in general and completes the proof.
\end{proof}

We remark that formula \eqref{t5e2} appears as \cite[Theorem 4]{Xu} with $\alpha$ replaced by $-\alpha$, while \eqref{t5e1} does not seem to have been previously noted.

\textbf{Acknowledgement:} The author wishes to thank Toufik Mansour and Matthias Schork for useful discussions and pointing out to him the paper of Mez\H{o} \cite{Me}.


\begin{thebibliography}{10}

\bibitem{BM}
H. Belbachir and M. Mihoubi, A generalized recurrence for Bell's polynomials: An
alternate approach to Spivey and Gould-Quaintance formulas, \emph{European J. Combin.}
\textbf{30} (2009), 1254--1256.

\bibitem{Br}
A. Z. Broder, The $r$-Stirling numbers, \emph{Discrete Math.} \textbf{49} (1984), 241--259.

\bibitem{Ca0}
L. Carlitz, On Abelian fields, \emph{Trans. Amer. Math. Soc.} \textbf{35} (1933), 122--136.

\bibitem{Ca1}
L. Carlitz, $q$-Bernoulli numbers and polynomials, \emph{Duke Math. J.} \textbf{15} (1948), 987--1000.

\bibitem{Ca2}
L. Carlitz, Generalized Stirling numbers, \emph{Combinatorial Analysis Notes}, Duke University (1968), 1--15.

\bibitem{Ca3}
L. Carlitz, Degenerate Stirling, Bernoulli and Eulerian numbers, \emph{Utilitas Math.} \textbf{15} (1979), 51--88.

\bibitem{Ca4}
L. Carlitz, Weighted Stirling numbers of the first and second kind-I, II, \emph{Fibonacci Quart.} \textbf{18} (1980), 242--257.

\bibitem{GR}
A. Garsia and J. Remmel, A combinatorial interpretation of $q$-derangement and $q$-Laguerre numbers, \emph{European J. Combin.} \textbf{1} (1980), 47--59.

\bibitem{GH}
H. W. Gould and A. T. Hopper, Operational formulas connected with two generalizations of Hermite polynomials, \emph{Duke Math. J.} \textbf{29} (1962), 51--63.

\bibitem{GQ}
H. W. Gould and J. Quaintance, Implications of Spivey's Bell number formula, \emph{J.
Integer Seq.} \textbf{11} (2008), Article 08.3.7.

\bibitem{Ho}
F. T. Howard, Degenerate weighted Stirling numbers, \emph{Discrete Math.} \textbf{57} (1985), 45--58.

\bibitem{HS}
L. C. Hsu and P. J-S. Shiue, A unified approach to generalized Stirling numbers, \emph{Adv. in Appl. Math.} \textbf{20} (1998), 366--384.

\bibitem{Ka}
J. Katriel, On a generalized recurrence for Bell numbers, \emph{J. Integer Seq.} \textbf{11} (2008), Article 08.3.8.

\bibitem{La}
I. Lah, Eine neue Art von Zahlen, ihre Eigenschaften und Anwendung in der
mathematischen Statistik, \emph{Mitteilungsbl. Math. Statist.} \textbf{7} (1955), 203--212.

\bibitem{LMS}
J. Lindsay, T. Mansour and M. Shattuck, A new combinatorial interpretation of a $q$-analogue of the Lah numbers, \emph{J. Comb.} \textbf{2:2} (2011), 245--264.

\bibitem{Ma}
T. Mansour, \emph{Combinatorics of Set Partitions}, Chapman \& Hall/CRC, as part of the series Discrete Mathematics and its Applications, 2012.

\bibitem{MSS}
T. Mansour, M. Schork and M. Shattuck, On a new family of generalized Stirling
and Bell numbers, \emph{Electron. J. Combin.} \textbf{18} (2011), \#P77.

\bibitem{Me0}
I. Mez\H{o}, The $r$-Bell numbers, \emph{J. Integer Seq.} \textbf{14} (2011), Article 11.1.1.

\bibitem{Me}
I. Mez\H{o}, The dual of Spivey's Bell number formula, \emph{J. Integer Seq.}
\textbf{15} (2012), Article 12.2.4.

\bibitem{NR}
G. Nyul and G. R\'{a}cz, The $r$-Lah numbers, \emph{Discrete Math.} (2014), http://dx.doi.org/10.1016/j.disc.2014.03.029 (in press).

\bibitem{Sh}
M. Shattuck, Bijective proofs of parity theorems for partition statistics, \emph{J. Integer Seq.} \textbf{8} (2005), Article 05.1.5.

\bibitem{Sh2}
M. Shattuck, Parity theorems for statistics on permutations and Catalan words, \emph{Integers} \textbf{5} (2005), \#A07.

\bibitem{Sp}
M. Z. Spivey, A generalized recurrence for Bell numbers, \emph{J. Integer Seq.} \textbf{11} (2008), Article 08.2.5.

\bibitem{St}
R. P. Stanley, \emph{Enumerative Combinatorics, Vol. $I$}, Cambridge University Press, 1997.

\bibitem{Wa}
C. Wagner, Partition statistics and $q$-Bell numbers ($q=-1$), \emph{J. Integer Seq.} \textbf{7} (2004), Article 04.1.1.

\bibitem{Wi}
H. Wilf, \emph{generatingfunctionology}, CRC Press, third edition, 2005.

\bibitem{Xu}
A. Xu, Extensions of Spivey's Bell number formula, \emph{Electron. J. Combin.} \textbf{19(2)} (2012), \#P6.


\end{thebibliography}
\end{document}